\DeclareMathOperator \re {Re}
\DeclareMathOperator \im {Im}
\DeclareMathOperator \supp {supp}
\newtheorem{prop}{Proposition}
\newtheorem{lem}[prop]{Lemma}
\newtheorem{thm}{Theorem}
\numberwithin{equation}{section}
\newcommand\imag{\mathop{\rm Im}}
\newcommand*{\defeq}{\mathrel{\vcenter{\baselineskip0.5ex \lineskiplimit0pt
                     \hbox{\scriptsize.}\hbox{\scriptsize.}}}%
                     =}
\def \Real {\mathbb R}
\def \Complex {\mathbb C}
\def \Natural {\mathbb N}
\def \mco {\mathcal O}
\def \XR {\Omega_R}
\author{T. J. Christiansen}
\address{Department of Mathematics, University of Missouri, Columbia, MO, USA}
\email{christiansent@missouri.edu}
\author{K. Datchev}
\address{Department of Mathematics, Purdue University, West Lafayette, IN, USA}
\email{kdatchev@purdue.edu}
\title[Resolvent estimates for star-shaped waveguides]{Resolvent estimates, wave decay, and resonance-free regions for star-shaped waveguides}
\date{\today}
\begin{document}

\begin{abstract}
Using coordinates $(x,y)\in \Real\times \Real^{d-1}$, we introduce the notion that an unbounded domain in $\Real^d$ is star shaped with respect to $x=\pm \infty$.  For such domains, we prove estimates on the resolvent of the Dirichlet Laplacian near the continuous spectrum.  When the domain has  infinite cylindrical ends, this has consequences for
wave decay and resonance-free regions. Our results also cover examples beyond the star-shaped case, including scattering by a strictly convex obstacle inside a straight planar waveguide.
\end{abstract}

\maketitle

\section{Introduction}

Let $\Omega \subset \mathbb R^d$, $d \ge 2$, be an open set of infinite volume, and equip the Laplacian $\Delta$ on $\Omega$ with Dirichlet boundary conditions. 
We wish to  understand how the behavior of the resolvent of the Laplacian near the spectrum is related to the geometry of $\Omega$, and to deduce consequences for wave evolution and decay, and for the distribution of resonances when these can be defined.

When $\mathbb R^d \setminus \Omega$ is bounded, this is the celebrated obstacle scattering problem. Then a particularly favorable geometric assumption, going back to the original work of Morawetz \cite{m61}, is that the obstacle is star shaped. In this paper we adapt this assumption to the study of waveguides, which are domains bounded in some directions and unbounded in others. We focus especially on domains with  cylindrical ends (which have one infinite dimension), but our resolvent estimates hold for domains with more general  ends. Our results also cover the problem of scattering by a strictly  convex obstacle inside a straight planar waveguide (see Figure~\ref{f:co}) for which we  prove a resolvent estimate in Theorem~\ref{t:convexobs}, wave decay in Theorem~\ref{thm:intro2}, and a resonance-free region in Theorem~\ref{t:resfree}.

\begin{figure}[h]
\labellist
\pinlabel $\Omega$ [l] at 600 170 
\endlabellist
\includegraphics[width=10cm]{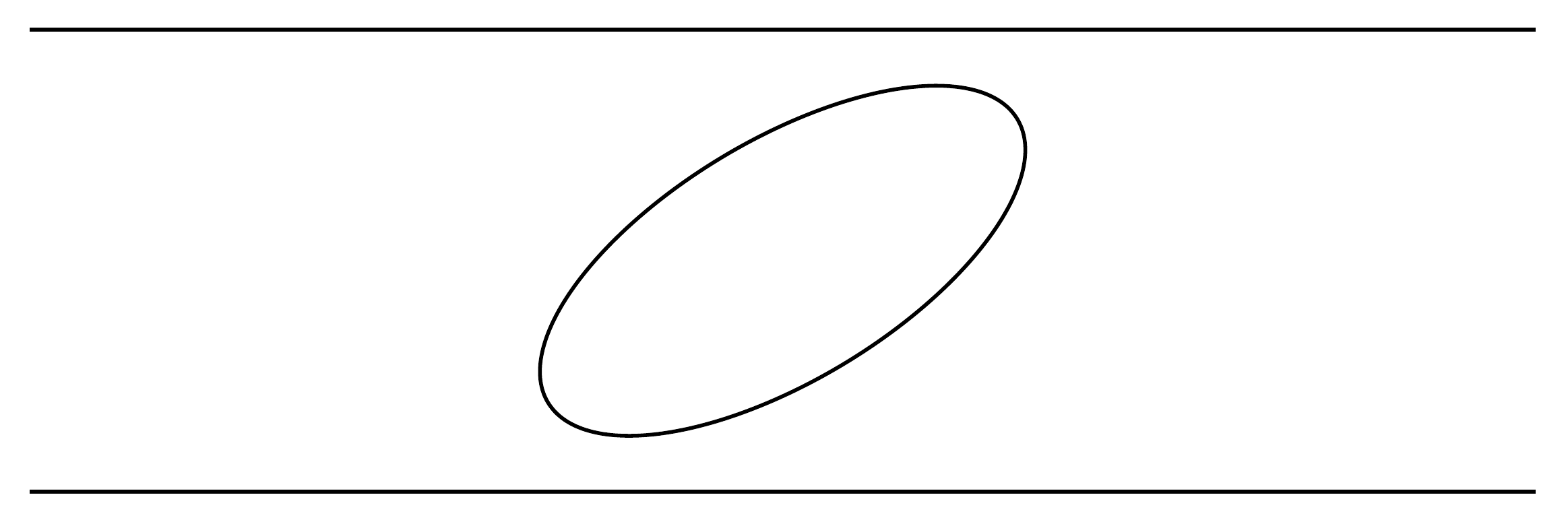}
 \caption{A strictly  convex obstacle inside a straight planar waveguide.}\label{f:co}
\end{figure}

Our analysis is based on the following definition, which applies to some but not all strictly  convex obstacles inside a straight planar waveguide.

\noindent\textbf{Definition.} We say that $\Omega$ is \textit{star shaped with respect to $x = \pm \infty$} if
\begin{equation}\label{e:xnux}
 x \nu_x \le 0 \text{ throughout } \partial \Omega,
\end{equation}
where $(x,y) = (x, y_1,  \dots, y_{d-1})$ are Cartesian coordinates on $\mathbb R^d$ and $\nu = (\nu_x, \nu_{y_1}, \dots \nu_{y_{d-1}})$ is the outward pointing unit normal  vector to $\partial \Omega$. 

In other words, \eqref{e:xnux} says that, if $p \in \partial \Omega$ does not lie in the coordinate hyperplane $x=0$, then $\nu$ at $p$  either points toward $x=0$
or is parallel to $x=0$.  See Figures~\ref{f:cig},~\ref{f:hour}, and~\ref{f:flat} below for examples of such domains with $d=2$; in Figure~\ref{f:cig} the vertical axis $x=0$ is to the left of the domain, and in Figures~\ref{f:hour} and~\ref{f:flat} the axis is in the center. Note that the example in Figure~\ref{f:co} does \textit{not} obey \eqref{e:xnux}.

This is to be compared with the analogous assumption in  scattering by a compact obstacle, as in \cite[(11)]{m61}, which says that
\begin{equation}\label{e:xynu}
x \nu_x + y_1 \nu_{y_1} + \cdots + y_{d-1}\nu_{y_{d-1}} \le 0  \text{ throughout } \partial \Omega.
\end{equation}
In other words, \eqref{e:xynu} says that the obstacle $\mathbb R^d \setminus \Omega$ is star shaped.

Note that \eqref{e:xynu} implies that there are no trapped billiard trajectories in $\Omega$; that is, all billiard trajectories go to infinity forward and backward in time. By contrast, our assumption \eqref{e:xnux} allows trapping, and indeed all domains with  cylindrical ends have trapped trajectories. In some sense, among domains with  cylindrical ends, those obeying \eqref{e:xnux} have the least trapping possible. In Figures \ref{f:cig}, \ref{f:hour}, and \ref{f:flat} below, the trapped trajectories are the vertical bouncing ball orbits between points where the boundary is horizontal.

Fundamental to the proofs of the resolvent estimates we give under the assumption \eqref{e:xnux}  are some integration by parts identities.  By combining the assumption (1.1) on the boundary $\partial \Omega$ with the Dirichlet boundary condition, we obtain identities in which the boundary term has favorable sign. This part of our proofs is analogous to \cite[(16)]{m61}.

We first present our resolvent estimates, which hold for domains with rather general infinite ends. Afterwards we give consequences for wave evolution and analytic continuation of the resolvent, under the additional assumption that $\Omega$ has  cylindrical ends.

\subsection{Resolvent estimates} 
The spectrum of $-\Delta$, with Dirichlet boundary conditions, is contained in $[0,\infty)$, and for $z \in \mathbb C$ not in the spectrum, let
\[
(-\Delta - z)^{-1} \colon L^2(\Omega) \to L^2(\Omega),
\]
be the corresponding resolvent. Throughout the paper we assume that $\partial \Omega$ is  Lipschitz, and that every point $p$ on $\partial \Omega$ has a neighborhood $U_p$ such that either $U_p \cap \Omega$ is convex or $U_p \cap \partial \Omega$ is $C^{1,1}$.

Our strongest result holds in the case that the domain has only one infinite end:

\begin{thm}\label{t:cig}
Suppose that $\Omega$ satisfies the assumption \eqref{e:xnux}   and that $x>0$ throughout $\Omega$. Then for any $\delta \in (0,1]$ and $z \in \mathbb C \setminus [0,\infty)$ we have
 \begin{equation}\label{e:recig}
    \|(1+x)^{-\frac{3+\delta}2}(-\Delta - z)^{-1}(1+x)^{-\frac{3+\delta}2}\|_{L^2(\Omega) \to L^2(\Omega)}  \le \frac 3 \delta(1 + |z|^{1/2}).
 \end{equation}
\end{thm}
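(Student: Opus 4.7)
The plan follows Morawetz's multiplier strategy. Set $h=(1+x)^{-(3+\delta)/2}f$ and $u=(-\Delta-z)^{-1}h$, and test the equation $(-\Delta-z)u=h$ against $Mu = a(x)\partial_x u + b(x)u$ for real-valued profiles $a,b$ to be chosen, taking the real part. Integration by parts, together with the Dirichlet condition $u|_{\partial\Omega}=0$ (which makes $\nabla u = (\partial_\nu u)\nu$ on $\partial\Omega$), collapses the boundary integral to $-\tfrac12\int_{\partial\Omega}a\nu_x|\partial_\nu u|^2\,dS$. The star-shape hypothesis \eqref{e:xnux} combined with $x>0$ in $\Omega$ forces $\nu_x\le 0$ on $\partial\Omega$, so choosing $a\ge 0$ with $a(0)=0$ makes this boundary term non-negative, in direct analogy with \cite[(16)]{m61}.

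The volume integrand takes the form $\bigl(\tfrac12 a'+b\bigr)|\partial_x u|^2 + \bigl(b-\tfrac12 a'\bigr)|\nabla_y u|^2 - \tfrac12 b''|u|^2$, and the key design choice is to make the $|u|^2$ coefficient equal to the target weight. Setting $b''(x)=-2(1+x)^{-(3+\delta)}$ and integrating with $b(0)=0$ and $b'$ bounded yields the bounded non-negative profile $b(x) = \tfrac{2}{(1+\delta)(2+\delta)}\bigl(1-(1+x)^{-(1+\delta)}\bigr)$; then taking $a'=2b$ with $a(0)=0$ kills the $|\nabla_y u|^2$ coefficient, makes the $|\partial_x u|^2$ coefficient equal to $2b\ge 0$, and produces an $a\ge 0$ that grows linearly. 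A short integration-by-parts computation using $a'=2b$ gives $\re\int u\,\overline{Mu}\,dV=0$, so the $-z$ part contributes only a single cross term $\im(z)\,I(a)$ where $I(a)=\im\int a u\,\overline{\partial_x u}\,dV$, yielding the master identity
\[
\int 2b|\partial_x u|^2\,dV + \int (1+x)^{-(3+\delta)}|u|^2\,dV + (\text{nonneg bdry}) + \im(z)\,I(a) = \re\int h\,\overline{Mu}\,dV.
\]

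I would then bound the right-hand side by Cauchy--Schwarz and Young's inequality. The piece $\int h\,a\,\overline{\partial_x u}\,dV$ is absorbed into $\int 2b|\partial_x u|^2\,dV$ via the pointwise bound $a^2/(2b)\le 2(1+x)^{3+\delta}$, which I verify directly from the explicit formulas by separating the regimes $x\le 1$ and $x\ge 1$; the piece $\int h\,b\,\overline u\,dV$ is absorbed into $\int(1+x)^{-(3+\delta)}|u|^2\,dV$ using only that $b$ is bounded. Tracking the resulting constants produces the $3/\delta$ prefactor. The main obstacle is the cross term $\im(z)\,I(a)$, which must produce the $|z|^{1/2}$ factor, because the naive estimate $|I(a)|\le \int a|u||\partial_x u|$ runs into the pointwise obstruction that $a^2(1+x)^{3+\delta}/b$ is unbounded as $x\to\infty$. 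My plan is to handle it by a $|z|^{1/2}$-scaled Young inequality, splitting $a|u||\partial_x u|$ with a tuned weight so that one factor is absorbed into $\int 2b|\partial_x u|^2\,dV$ and the other, after trading $\im(z)$ against $|z|^{1/2}$, is absorbed into $U^2=\int(1+x)^{-(3+\delta)}|u|^2\,dV$, paying a prefactor of order $|z|^{1/2}/\delta$. Solving the resulting quadratic inequality in $U$ then yields $U\le\tfrac{3}{\delta}(1+|z|^{1/2})\|(1+x)^{(3+\delta)/2}h\|_{L^2}$. The delicate step throughout is arranging the constants so that, after the chain of Young absorptions, the coefficient lands on exactly $3/\delta$ rather than a larger multiple.
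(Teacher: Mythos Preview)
Your multiplier identity is set up correctly, and the choice $a'=2b$, $b''=-2(1+x)^{-(3+\delta)}$ does produce exactly the weighted $|u|^2$ term you want while killing the $|\nabla_y u|^2$ contribution. The problem is that this choice \emph{forces} $a$ to grow linearly (since $b$ tends to a positive constant), and that is fatal for the cross term $\Im(z)\,I(a)$. You already identify the obstruction: by Young's inequality, absorbing $|\Im z|\int a|u||u'|$ into $\int 2b|u'|^2$ and $U^2=\int(1+x)^{-(3+\delta)}|u|^2$ requires the product $b\cdot(1+x)^{-(3+\delta)}$ to dominate $(\Im z)^2 a^2$, but the ratio $a^2/\bigl(b(1+x)^{-(3+\delta)}\bigr)\sim x^{5+\delta}$ is unbounded. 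No rescaling of the Young parameter, and no ``trading $\Im(z)$ against $|z|^{1/2}$,'' repairs a pointwise blow-up of this kind: however you split, one of the two pieces carries an unbounded weight and cannot be absorbed. Using the equation to gain a factor of $\varepsilon$ (via $\varepsilon\|u\|^2=-\Im\langle h,u\rangle$) only controls $\varepsilon\|u\|^2$ with the \emph{unweighted} norm, which does not touch $\int a|u||u'|$ when $a$ grows. So as written the argument does not close.

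The paper avoids this by working with a \emph{bounded} multiplier $w(x)=1-(1+x)^{-\delta}$ in the commutator $[w\partial_x,\partial_x^2]$ (no zeroth-order part), obtaining on the left $\|\sqrt{w'}\,u'\|^2$ with $w'(x)=\delta(1+x)^{-1-\delta}$, and on the right a term $\tfrac14\langle w'''u,u\rangle$. The point is that $w'''=(1+\delta)(2+\delta)\,w'\,(1+x)^{-2}$, so a one-line weighted Hardy/Poincar\'e inequality
\[
\|\sqrt{w'''}\,u\|\ \le\ \frac{2\sqrt{1+\delta}}{\sqrt{2+\delta}}\,\|\sqrt{w'}\,u'\|
\]
lets the $|u|^2$ term be absorbed back into $\|\sqrt{w'}\,u'\|^2$. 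Because $w$ is bounded, the cross term becomes simply $\varepsilon\,\Im\langle w u',u\rangle\le\varepsilon\|u'\|\|u\|$, and this is handled not by Young's inequality but by \emph{using the equation twice}: $\varepsilon\|u\|^2=-\Im\langle Pu,u\rangle$ and $\|u'\|^2\le\Re\langle Pu,u\rangle+E\|u\|^2$ combine to give $\varepsilon\|u\|\|u'\|\le\sqrt{E+\varepsilon}\,\|(1+x)^{(3+\delta)/2}Pu\|\,\|(1+x)^{-(3+\delta)/2}u\|$, which is exactly of the right shape to absorb. Finally, the passage from $z=E+i\varepsilon$ with $E\ge 0$, $\varepsilon>0$ to all $z\in\mathbb C\setminus[0,\infty)$ is done by Phragm\'en--Lindel\"of; your proposal asserts the general-$z$ bound directly without this step.
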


Examples include cigar-shaped domains such as the union of the ball $\{(x,y) \colon (x-1)^2 + |y|^2 <1\}$ with the half-cylinder $\{(x,y) \colon x > 1 \text{ and } |y|<1\}$, the parabolic domain $\{(x,y) \colon x>|y|^2\}$, and more generally any epigraph $\{(x,y) \colon x > f(y)\}$ where $f \in C^{1,1}(\mathbb R^{d-1})$ is nonnegative. See also Figure~\ref{f:cig}.

\begin{figure}[h]
\labellist
\pinlabel $\Omega$ [l] at 300 190
\pinlabel $\Omega$ [l] at 820 190
\pinlabel $\Omega$ [l] at 1330 170
\endlabellist
\includegraphics[width=4cm]{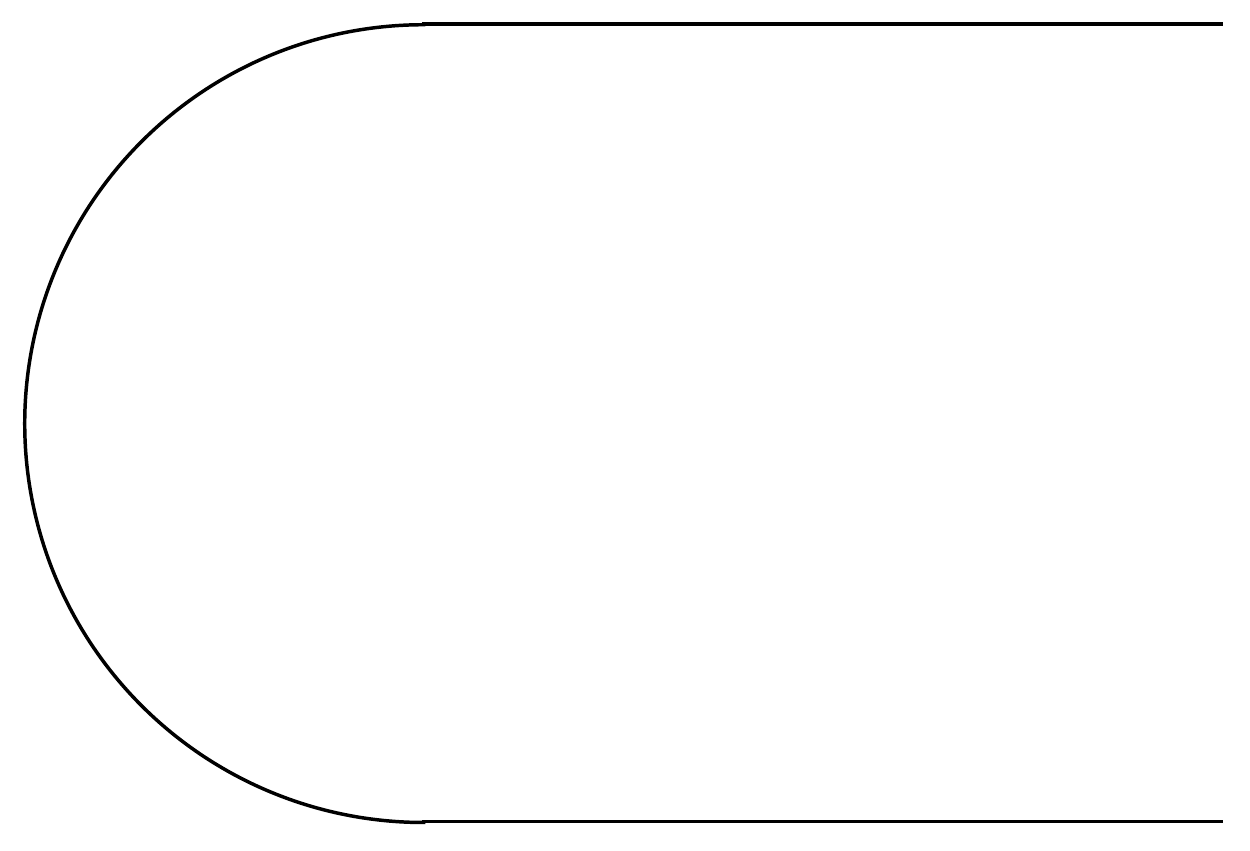}
\hspace{1.5cm}
\includegraphics[width=4cm]{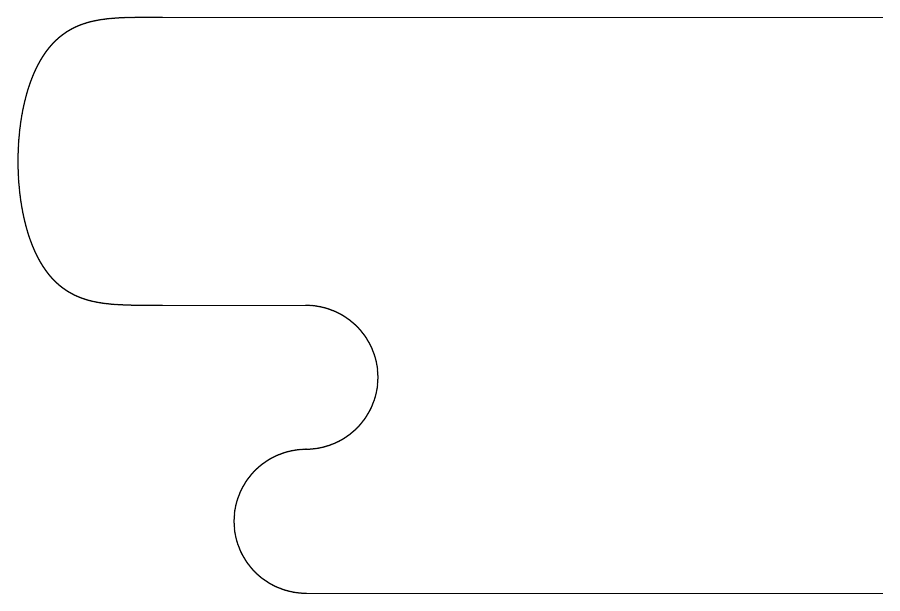} \hspace{1.5cm}
\includegraphics[width=4cm]{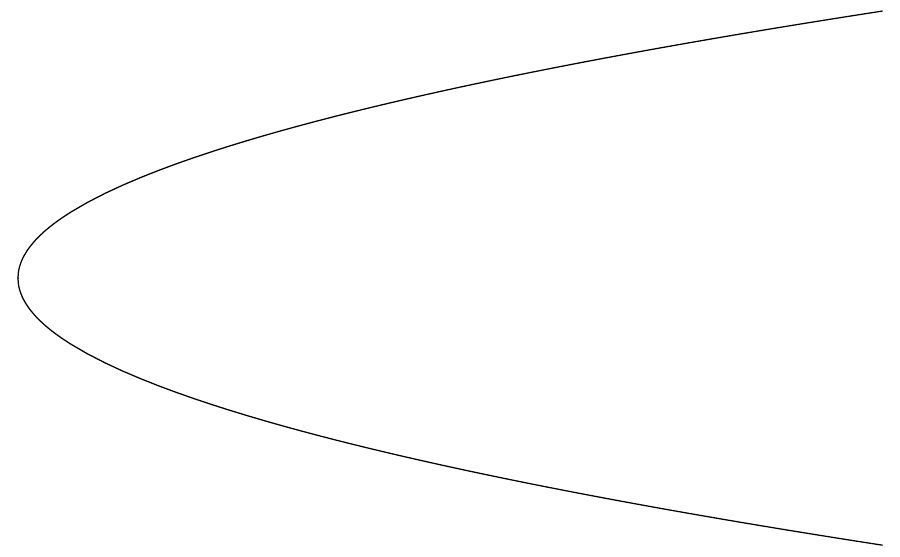} 
 \caption{Some domains to which Theorem \ref{t:cig} applies. The first two have  cylindrical ends, and the third does not.}\label{f:cig}
\end{figure}

A weaker version of Theorem \ref{t:cig} holds in the presence of multiple infinite ends, under a `flaring' condition; namely when there is a suitable region where $x\nu_x$ is bounded away from zero. Note that some such condition is needed, because if $x \nu_x \equiv 0$ then by separation of variables and direct computation (as in Section 1.1 of \cite{cd2}) one checks that there are infinitely many resonances embedded in the continuous spectrum.

\begin{thm}\label{t:hour}
Suppose that $\Omega$ satisfies the assumption \eqref{e:xnux}  and there is an open interval $I$  and a positive constant $C_I$ such that
\begin{equation}\label{e:flare}
x \nu_x \le -C_I,
\end{equation}
on the intersection of $\partial \Omega$ with $I \times \mathbb R^{d-1}$.  Suppose further that the intersection of $\Omega$ with $I \times \mathbb R^{d-1}$ is bounded. Then for any $\delta >0$ there are positive constants $E_0$ and $C$ such that
 \begin{equation}\label{e:e12}
    \|(1+|x|)^{-\frac{3+\delta}2}(-\Delta - E - i \varepsilon)^{-1}(1+|x|)^{-\frac{3+\delta}2}\|_{L^2(\Omega) \to L^2(\Omega)}  \le  C E^{1/2}
 \end{equation}
for all $E \ge E_0$ and $\varepsilon \in (0,1]$.
\end{thm}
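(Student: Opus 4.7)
My plan extends the Morawetz-type multiplier argument underlying Theorem~\ref{t:cig} to handle the case where the coordinate $x$ changes sign. I would set $u=(-\Delta-E-i\varepsilon)^{-1}f\in H^1_0(\Omega)$, multiply the equation by $x\,\partial_x\bar u+\tfrac12\bar u$, and integrate over $\Omega$. After using \eqref{e:xnux} and the Dirichlet condition, taking real parts yields an identity of the form
\[
\tfrac12\!\int_{\partial\Omega}(-x\nu_x)|\partial_\nu u|^2\,d\sigma+\int_\Omega|\partial_x u|^2\,dV=\re\!\int_\Omega f\,(x\,\partial_x\bar u+\tfrac12\bar u)\,dV-\varepsilon\!\int_\Omega x\,\im(u\,\partial_x\bar u)\,dV,
\]
in which the boundary term is nonnegative by \eqref{e:xnux}; the $\varepsilon$-term is controlled (uniformly in $\varepsilon\in(0,1]$) by a truncation argument together with the exponential decay of $u$ at infinity.

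The essential new ingredient compared with Theorem~\ref{t:cig} is the flaring assumption \eqref{e:flare}, which gives the strict lower bound $\tfrac{C_I}{2}\int_{\partial\Omega\cap(I\times\R^{d-1})}|\partial_\nu u|^2\,d\sigma$ on a compact piece of boundary. Since $\Omega\cap(I\times\R^{d-1})$ is bounded, an elliptic boundary/interior estimate applied to $(-\Delta-E-i\varepsilon)u=f$ (a Rellich-type identity with $\partial_\nu u\lvert_{\partial\Omega}$ as data, or a semiclassical interior ellipticity argument valid for $E$ large) upgrades this boundary flux control to an $L^2$ bound on $u$ over a bounded neighborhood of the bottleneck, modulo suitably weighted norms of $f$.

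Once $u$ is anchored near the bottleneck, I would adapt the argument of Theorem~\ref{t:cig} to each of the half-domains $\Omega\cap\{x>x_0\}$ and $\Omega\cap\{x<-x_0\}$ (choosing $x_0$ so that $I\subset[-x_0,x_0]$). On each half the coordinate has a fixed sign, so the weighted Morawetz multiplier produces a weighted $L^2$ estimate analogous to \eqref{e:recig} with weight $(1+|x|)^{-(3+\delta)/2}$; the contributions from the artificial internal boundary at $x=\pm x_0$ are absorbed using the $L^2$ control already obtained near the bottleneck.

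The main obstacle is the second step, namely converting flux control on $\partial\Omega\cap(I\times\R^{d-1})$ into interior $L^2$ control of $u$ with the correct power of $E$. The restriction $E\ge E_0$ enters here: one must avoid resonance with local Dirichlet eigenvalues of the bottleneck region, and arrange the semiclassical balance that yields the $E^{1/2}$ growth on the right. If the direct multiplier approach proves too rigid, a contradiction/compactness argument would preserve the same geometric content: along a sequence with $E_n\to\infty$, $\|(1+|x|)^{-(3+\delta)/2}u_n\|_{L^2}=1$, and $E_n^{-1/2}\|(1+|x|)^{(3+\delta)/2}f_n\|_{L^2}\to 0$, extract a semiclassical defect measure on $\Omega$ and use \eqref{e:xnux}--\eqref{e:flare} (via propagation of singularities and unique continuation from the flaring piece of boundary) to force the limit profile to vanish, contradicting the normalization.
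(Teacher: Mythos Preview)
Your outline has the right geometric skeleton---use the Morawetz multiplier to harvest the nonnegative boundary term, use the flaring hypothesis \eqref{e:flare} to turn part of that term into a genuine lower bound $\int_{\partial\Omega\cap(I\times\R^{d-1})}|\partial_\nu u|^2$, and then feed this back to anchor $u$ near the neck so the one-ended argument can be run on each side. That is exactly the architecture of the paper's proof.

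The gap is in your second step: you do not have a mechanism for turning control of $\int|\partial_\nu u|^2$ on a compact piece of $\partial\Omega$ into control of $\|u\|_{L^2}$ in the interior with the right dependence on $E$. An ``elliptic boundary/interior estimate'' goes the wrong way (it bounds $\partial_\nu u$ by interior data, not conversely), and ``semiclassical interior ellipticity'' is not available here because the operator $-\Delta-E$ is \emph{not} elliptic at frequency $\sqrt E$; characteristics pass right through the bottleneck. The defect-measure/unique-continuation fallback could in principle be made to work, but it is a different and much heavier argument, and it would not by itself deliver the explicit $E^{1/2}$ bound without further quantitative input.

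The paper closes this gap with two additional, completely elementary, integration-by-parts identities. First (Lemma~\ref{l:ibp2}), a multiplier $\mu(x)\partial_x$ with $\mu'\equiv 1$ on the neck yields
\[
E\int \mu'|u|^2 \;\le\; \text{(good terms)} \;+\;\sum_{j}\int \mu'|\partial_{y_j}u|^2 \;+\;\int_{\partial\Omega}\mu|\partial_\nu u|^2\nu_x,
\]
and the boundary term is $\le 0$ by \eqref{e:xnux}. This trades local control of $u$ for control of the \emph{transversal} derivatives $\partial_{y_j}u$, at a gain of a full factor of $E$. Second (Lemma~\ref{l:ibp3}), a Morawetz identity in the $y_j$-direction, applied to $\chi u$, bounds $\|\chi\partial_{y_j}u\|^2$ by $\int_{\partial\Omega}|\chi\partial_\nu u|^2$ plus lower-order terms. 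Chaining these two steps, the boundary flux from the flaring region controls $E\|\chi_1 u\|^2$, which is precisely the remainder produced when you apply the weighted Poincar\'e inequality \eqref{e:poinc} to a cut-off of $u$ (this replaces your ``artificial internal boundary at $x=\pm x_0$'' device). A smaller point: your opening identity uses the unbounded multiplier $x\partial_x$, which makes the $\varepsilon$-term and the $fu$-term uncontrollable on an unbounded domain; the paper works throughout with a bounded weight $w$ satisfying $w'>0$, $xw\ge 0$, and $w'(x)=\delta(1+|x-x_0|)^{-1-\delta}$ outside the flaring interval, so that all pairings come with the correct polynomial weights from the start.
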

Examples include hourglass-shaped domains like $\{(x,y) \colon |y| < f(x)\}$ for some nonconstant $f \in C^{1,1}(\mathbb R)$ satisfying $x f'(x) \ge 0$ for all $x$. See also Figure~\ref{f:hour}.

\newcommand*{\vcenteredhbox}[1]{\begin{tabular}{@{}c@{}}#1\end{tabular}}

\begin{figure}[h]
\labellist
\pinlabel $\Omega$ [l] at 300 150
\pinlabel $\Omega$ [l] at 815 122
\pinlabel $\Omega$ [l] at 1340 155
\endlabellist
\vcenteredhbox{\includegraphics[width=4cm]{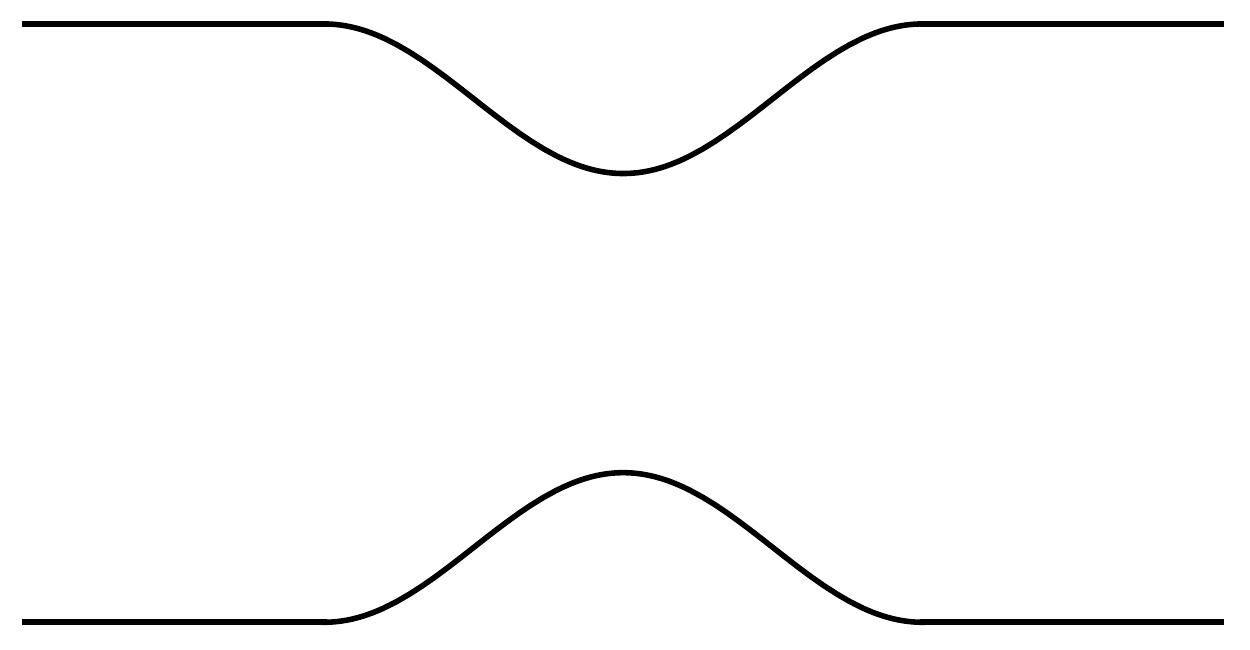}}
\hspace{1.5cm}
\vcenteredhbox{\includegraphics[width=4cm]{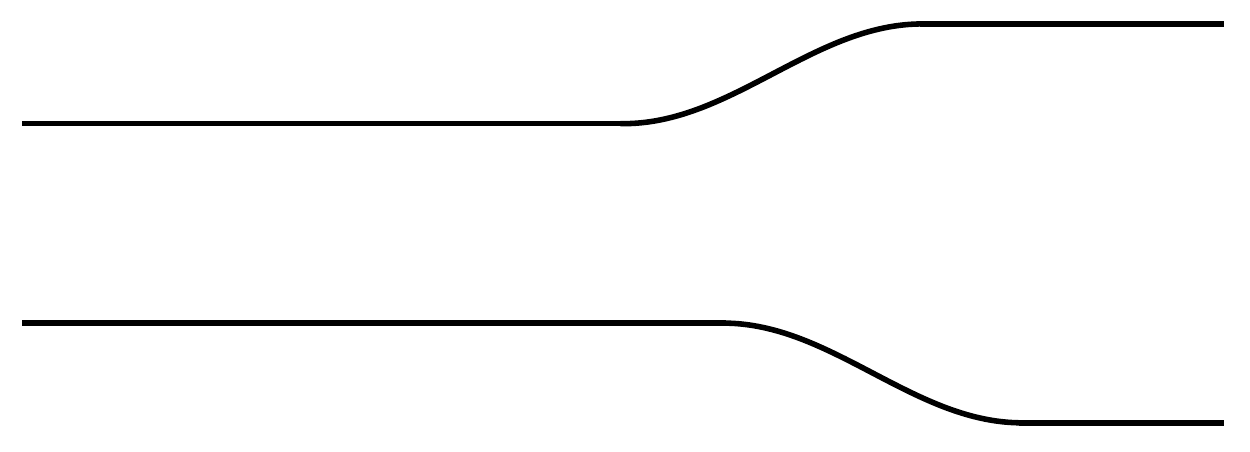}} 
\hspace{1.5cm}
\vcenteredhbox{\includegraphics[width=4cm]{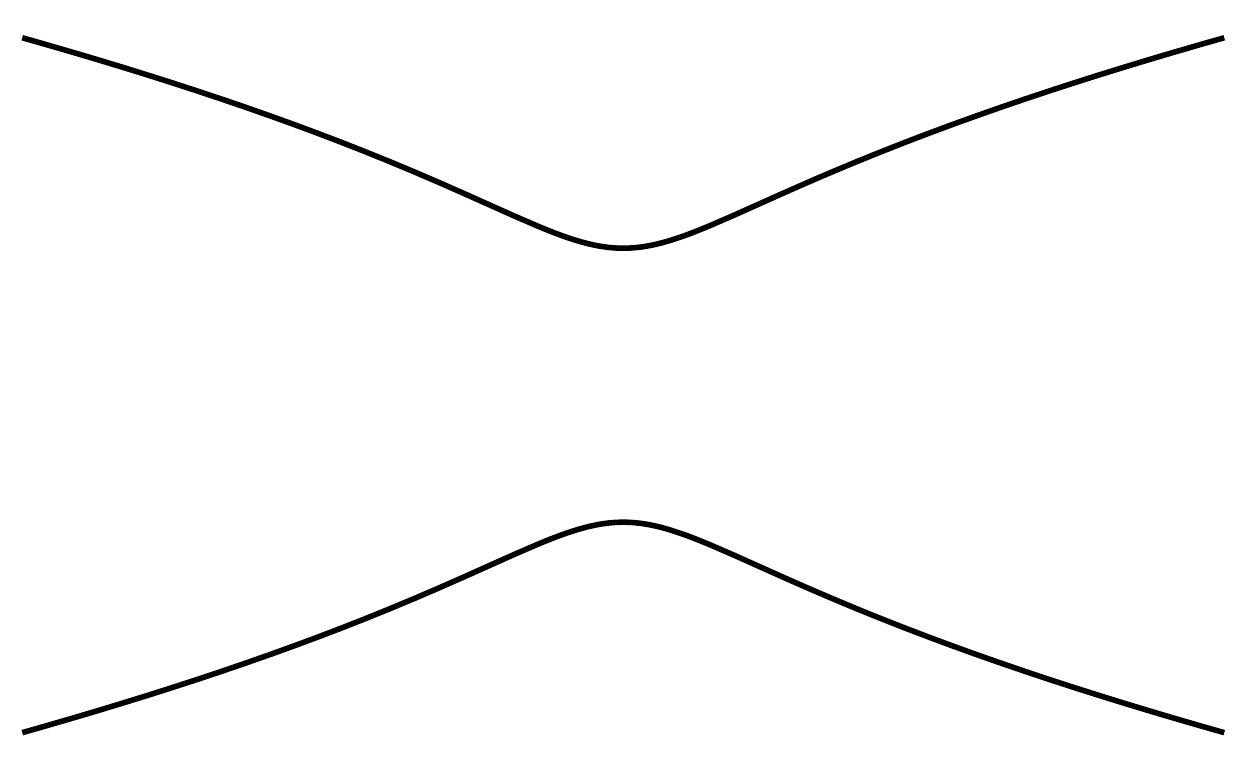}} 
 \caption{Some domains to which Theorem \ref{t:hour} applies. The first two have  cylindrical ends and the third does not.}\label{f:hour}
\end{figure}

For planar domains, we only need the flaring requirement \eqref{e:flare} on part of the intersection of $\partial \Omega$ with $I \times \mathbb R$:

\begin{thm}\label{t:flat}
Suppose that $\Omega$ satisfies the assumption \eqref{e:xnux} and that $d=2$. Let $I$ be an open interval and let $C_I$ be a positive constant. Let $\Gamma_F$ be part of the intersection of $\partial \Omega$ with $I \times \mathbb R$ on which the flaring requirement \eqref{e:flare} holds. Suppose that the intersection of $\Omega$ with $I \times \mathbb R$ consists of bounded open sets $\Omega_1, \dots, \Omega_K$ with mutually disjoint closures such that for each $k = 1, \dots K$, 
\[(\partial \Omega \cap \partial \Omega_k) \setminus \Gamma_F \subset I \times \{a_k\},\] for some real $a_k$.
Then for any $\delta >0$ there are positive constants $E_0$ and $C$ such that \eqref{e:e12} holds for all $E \ge E_0$ and $\varepsilon \in (0,1]$.
\end{thm}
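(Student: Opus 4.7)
The plan is to adapt the proof of Theorem~\ref{t:hour} by exploiting a feature special to two dimensions: any portion of $\partial\Omega$ that lies on a horizontal line $\{y=a\}$ has outward unit normal parallel to the $y$-axis, so $\nu_x\equiv 0$ there. Such portions therefore contribute nothing to the Morawetz boundary integral $\int_{\partial\Omega} x\nu_x|\partial_\nu u|^2\,dS$, and the hypothesis that every non-flaring piece of $\partial\Omega_k$ lies on the single line $I\times\{a_k\}$ means that, on each $\Omega_k$, this integral already reduces to an integral over $\Gamma_F\cap\partial\Omega_k$.

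Concretely, set $g=(1+|x|)^{-(3+\delta)/2}f$ and $u=(-\Delta-E-i\varepsilon)^{-1}g$, and test the equation $(-\Delta-E-i\varepsilon)u=g$ against a multiplier of the form $\chi(x)\,x\partial_x u$ (plus a lower-order correction), where $\chi$ is a smooth cutoff adapted to the flaring interval $I$. Integrating by parts twice and taking real parts produces, as in Theorem~\ref{t:hour}, an interior coercive term controlling $\int\chi|\partial_x u|^2$ together with the boundary term
\[
\tfrac12\int_{\partial\Omega}\chi(x)\,x\nu_x\,|\partial_\nu u|^2\,dS,
\]
after the Dirichlet condition is used to reduce $|\nabla u|^2$ to $|\partial_\nu u|^2$ on $\partial\Omega$. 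By \eqref{e:xnux} this boundary integral is nonpositive; by the planar horizontal-segment observation above, the only non-vanishing contribution on $\partial\Omega\cap(I\times\mathbb R)$ comes from $\Gamma_F$, where \eqref{e:flare} gives $x\nu_x\le -C_I$. When the boundary term is moved to the favorable side of the identity it therefore dominates $\tfrac12 C_I\int_{\Gamma_F}\chi\,|\partial_\nu u|^2\,dS$, which is precisely the coercive boundary datum used in Theorem~\ref{t:hour}, even though here the flaring hypothesis covers only a subset of the boundary inside $I\times\mathbb R$.

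From this point the argument proceeds exactly as for Theorem~\ref{t:hour}: combine the inner coercive estimate with a second Morawetz identity on the unbounded portions of $\Omega$ (where only \eqref{e:xnux} is available) and absorb the cross terms produced by differentiating the cutoff using the high-energy hypothesis $E\ge E_0$ to obtain \eqref{e:e12}. The main obstacle is organizing the cutoff $\chi$ and its commutator errors consistently across all of the components $\Omega_1,\dots,\Omega_K$ and the two unbounded ends they connect, without creating sign-indefinite contributions at the vertical interfaces $\partial I\times\mathbb R$. The mutual disjointness of the closures $\overline{\Omega_k}$ ensures that these interfaces are well-separated, and the hypothesis $d=2$ is used essentially to guarantee that the non-flaring boundary pieces at heights $a_k$ really are one-dimensional arcs perpendicular to $e_x$, so that a single globally defined multiplier disposes of them automatically.
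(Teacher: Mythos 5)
The proposal has a genuine gap. You correctly note that on a horizontal boundary segment $\{y=a_k\}$ the outward normal is purely vertical ($\nu_x\equiv 0$), so that such segments contribute nothing to the boundary term $\int_{\partial\Omega}w|\partial_\nu u|^2\nu_x$ arising from the $x$-multiplier identity \eqref{e:ibpm}. This observation is indeed part of the paper's argument and is what lets one replace the full boundary integral by an integral over $\Gamma_F$ in the analogue of \eqref{e:chi3first}. But you then assert that ``from this point the argument proceeds exactly as for Theorem~\ref{t:hour},'' and this is where the proof breaks.

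The proof of Theorem~\ref{t:hour} is not finished at that stage: it goes on to apply the $y$-multiplier identity \eqref{e:ibpy} (via Lemma~\ref{l:ibp3}) to control $\|\chi_2\partial_y u\|^2$, and that identity produces the boundary term $\frac12\int_{\partial\Omega}y\,|\partial_\nu u|^2\,\nu_y$. On a horizontal segment $\nu_y=\pm1$, so this term does \emph{not} vanish there — quite the opposite, that is exactly where it is largest. In Theorem~\ref{t:hour} the resulting remainder $\int_{\partial\Omega}|\chi_2\partial_\nu u|^2$ is absorbed by the coercive term $\int_{\partial\Omega}|\chi_3\partial_\nu u|^2$ on the left, but in the setting of Theorem~\ref{t:flat} the left-hand coercive term is only $\int_{\Gamma_F}|\chi_3\partial_\nu u|^2$, and the flaring condition \eqref{e:flare} is not available on the horizontal pieces. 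The paper's fix is to subtract $a_k$ times the zero-commutator Rellich identity \eqref{e:ak} (with multiplier $\partial_y$ rather than $y\partial_y$) applied to $\psi_k u$, which effectively shifts $y\mapsto y-a_k$ in the boundary term and exactly cancels the contribution from $(\partial\Omega\cap\partial\Omega_k)\setminus\Gamma_F\subset I\times\{a_k\}$. This is the step where the hypothesis that each $\Omega_k$'s non-flaring boundary lies on a \emph{single} horizontal line is actually used; without it the $y$-multiplier boundary term cannot be eliminated, and your proposal as written gives no mechanism to do so.
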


Some examples of domains for which Theorem \ref{t:flat} applies are
shown in Figure \ref{f:flat}.  One class of such examples
is that of 
 straight planar waveguides with suitable convex obstacles 
$\left(\mathbb R \times (-1,1)\right)  \setminus \overline{\mco}$, where 
$\overline{\mco} \subset \mathbb R \times (-1,1)$ is a convex closed set such that the maximum and minimum values of $y$ on $\overline{\mco}$ are both attained on the axis $x=0$. In Theorem~\ref{t:convexobs} we prove the corresponding result for more general convex $\overline{\mco}$, as in Figure~\ref{f:co}.  These 
domains do not necessarily satisfy (\ref{e:xnux}), but the proofs of Theorems
\ref{t:flat} and \ref{t:convexobs} are similar.

\begin{figure}[h]
\labellist
\pinlabel $\Omega$ [l] at 300 170
\pinlabel $\Omega$ [l] at 820 180
\pinlabel $\Omega$ [l] at 1330 187
\endlabellist
\includegraphics[width=4cm]{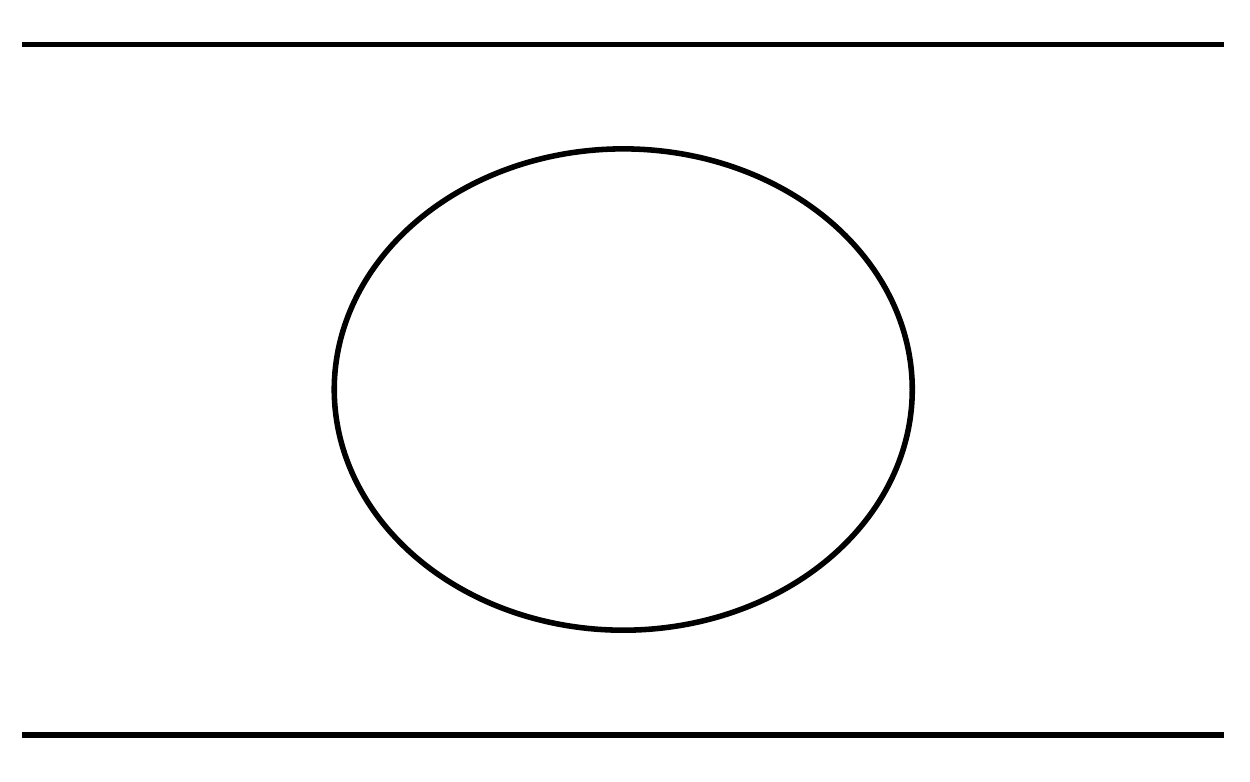}
\hspace{1.5cm}
\includegraphics[width=4cm]{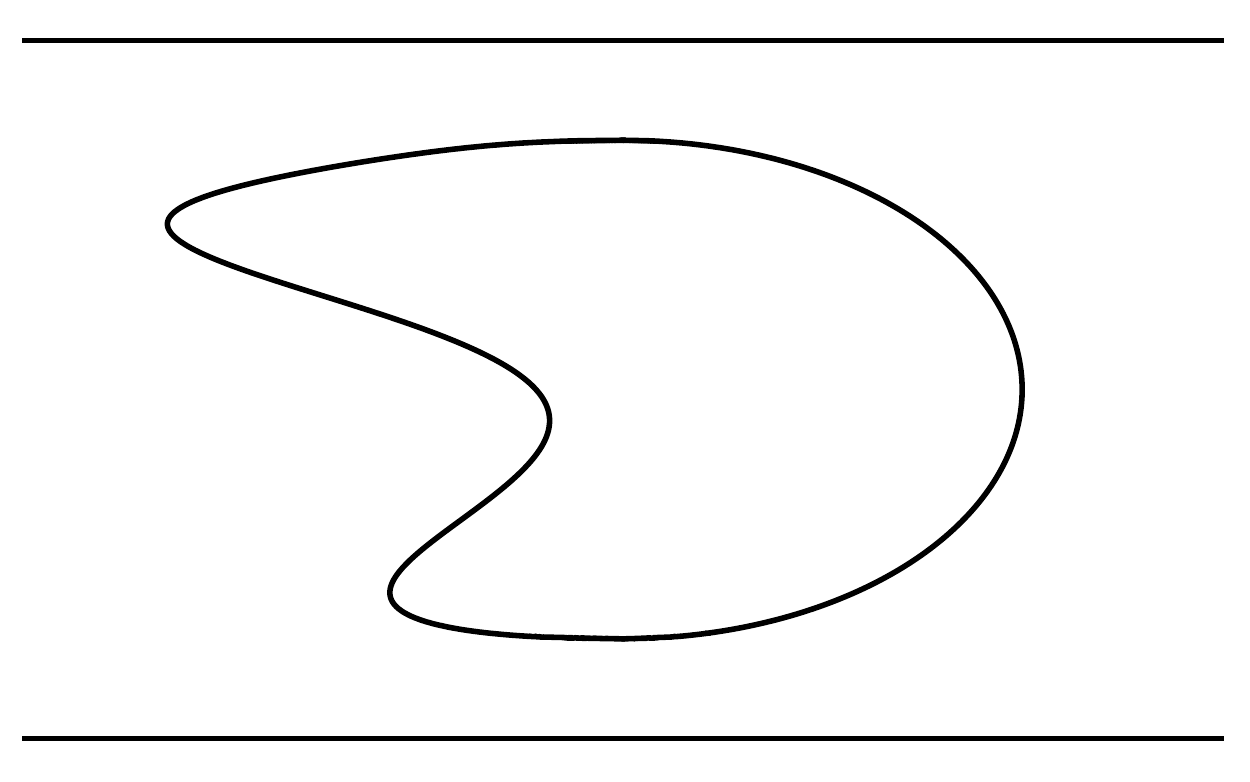}
\hspace{1.5cm}
\includegraphics[width=4cm]{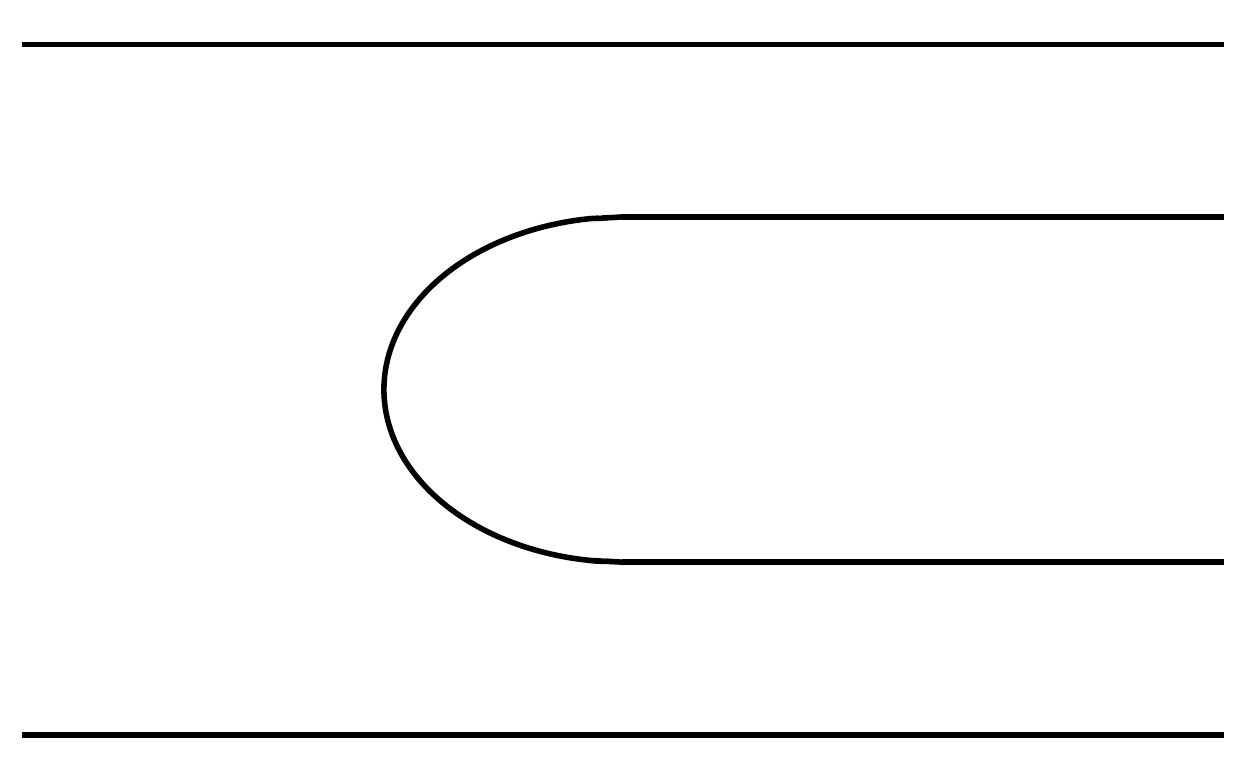}  
 \caption{Some domains with cylindrical ends to which Theorem \ref{t:flat} applies. }\label{f:flat}
\end{figure}


\begin{thm}\label{t:convexobs}
Let $\Omega=(\Real \times (-1,1))\setminus \overline{\mco}$, where 
$\mco$ 
is a non-empty open bounded strictly convex set with $C^{1,1}$ boundary
and $\overline{\mco}\subset
\Real \times (-1,1)$.  
Then for any $\delta>0$ there are positive constants $E_0$ and $C$
such that (\ref{e:e12}) holds for all $E \ge E_0$ and $\varepsilon \in (0,1]$.
\end{thm}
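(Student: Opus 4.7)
The plan is to adapt the Rellich--Pohozaev multiplier argument used for Theorem \ref{t:flat}. The obstacle is that a general strictly convex $\overline{\mco}$ need not satisfy $x\nu_x\le 0$ on $\partial\mco$---tilted ellipses exemplify this---so the Morawetz multiplier $x\,\partial_x u$ alone does not yield a favorable boundary contribution on $\partial\mco$. The remedy is to enrich the multiplier with a $y$-component that vanishes on the waveguide walls and exploits strict convexity on $\partial\mco$.

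After translating $\mco$ in the $x$ direction so that the origin lies in its interior, strict convexity furnishes $c>0$ with $(x,y)\cdot\nu\le -c$ on $\partial\mco$, where $\nu$ is the outward unit normal to $\Omega$. I would choose smooth cutoffs $\chi_1(x)$, $\chi_2(y)$ with $\chi_1\equiv 1$ on a neighborhood of the $x$-projection of $\mco$ and compactly supported, and $\chi_2\equiv 1$ on a neighborhood of the $y$-projection of $\mco$ with $\chi_2(\pm 1)=0$. Then the vector field
\[
V = x\,\partial_x + \chi_1(x)\,\chi_2(y)\,y\,\partial_y
\]
satisfies $V\cdot\nu = 0$ on $y=\pm 1$ (since $\chi_2(\pm 1)=0$) and $V\cdot\nu\le -c<0$ on $\partial\mco$ (since $\chi_1\chi_2\equiv 1$ there).

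I would then apply the Rellich--Pohozaev identity for the multiplier $Vu$ to $u=(-\Delta-E-i\varepsilon)^{-1}f$, combined with the scalar multiplier $\beta u$ for a suitable $\beta>0$ chosen to make the resulting linear combination of bulk integrands coercive. The boundary term $-\int_{\partial\Omega}(V\cdot\nu)|\partial_\nu u|^2\,dS$ is nonnegative by the construction of $V$, so it may be discarded. The bulk integral from $V$ reduces to $|\partial_x u|^2-|\partial_y u|^2$ outside the support of $\chi_1\chi_2$, vanishes pointwise where $\chi_1\chi_2\equiv 1$ (since there $V=(x,y)$ whose Jacobian cancels the divergence term), and produces cross terms supported in the bounded transition set where $\chi_1'$ or $\chi_2'$ is nonzero. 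Combining with the $\beta u$ identity and the Poincar\'e inequality $\|u\|_{L^2(\Omega)}\le C\|\partial_y u\|_{L^2(\Omega)}$ (from the Dirichlet condition on the walls) yields the weighted estimate \eqref{e:e12}; the weight $(1+|x|)^{-(3+\delta)/2}$ and the $E^{1/2}$ factor emerge in the standard way from the linear growth of the $x\,\partial_x$ component and the high-energy limit, just as in Theorems \ref{t:cig} and \ref{t:hour}.

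The main obstacle is controlling the cross terms generated by $\chi_1'$ and $\chi_2'$ in the transition region. These can be bounded by Cauchy--Schwarz in terms of $|\partial_x u|\,|\partial_y u|$ with a coefficient supported in a bounded set, and absorbed using the strict inequality $(x,y)\cdot\nu\le -c<0$ on $\partial\mco$, which provides positivity beyond what $(x,y)\cdot\nu\le 0$ alone would yield. Thus strict convexity of $\mco$ plays the role here that the flaring hypothesis \eqref{e:flare} played in Theorem \ref{t:flat}, and the bookkeeping needed to close the weighted estimate runs in close parallel.
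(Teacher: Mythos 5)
Your proposal takes a genuinely different route from the paper. The paper keeps the multiplier in the form $w\,\partial_x$ (no $y$-component at all) and overcomes the failure of \eqref{e:xnux} by letting $w$ be $y$-dependent: writing $\Omega$ minus two horizontal rays as $\Omega_+\cup\Omega_-$, it sets $w=w_+(x)$ on $\Omega_+$ and $w=w_-(x)$ on $\Omega_-$, where $w_+$ vanishes at $x_M$ and $w_-$ at $x_m$, so that $w\,\nu_x\le 0$ on $\partial\mathcal O$ despite $x_M\ne x_m$; this keeps the commutant of the form $w\partial_x$ and preserves the structure of Lemma \ref{l:ibpcig}, with the chain through Lemmas \ref{l:ibp2} and \ref{l:ibp3} then running exactly as in Theorems \ref{t:hour} and \ref{t:flat}. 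Your vector field $V=x\,\partial_x+\chi_1\chi_2\,y\,\partial_y$ instead restores boundary positivity on $\partial\mathcal O$ by adding a $y$-component that is cut off near the walls; this is an attractive idea, but as sketched it has gaps.

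First, the weight $(1+|x|)^{-(3+\delta)/2}$ in \eqref{e:e12} does \emph{not} come from ``linear growth of $x\,\partial_x$.'' In Theorems \ref{t:cig} and \ref{t:hour} the commutant is $w(x)\partial_x$ with $w$ \emph{bounded} and $w'\sim(1+|x|)^{-1-\delta}$; it is the decay of $w'$ and $w'''$, together with the weighted Poincar\'e inequality \eqref{e:poinc}, that produces the polynomial weight. With $x\,\partial_x$ you would pair $Pu$ against $x\,u_x$, which requires $x\,Pu\in L^2$ rather than $(1+|x|)^{(3+\delta)/2}Pu\in L^2$, and does not yield \eqref{e:e12}. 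You would need to replace $x$ by such a $w$; once you do that, near $\partial\mathcal O$ your $V$ is $(w(x),y)$ rather than $(x,y)$, and whether $w\nu_x+y\nu_y\le -c$ holds there requires a choice of $w$ close to $x-x_0$ on the $x$-projection of $\mathcal O$ --- doable, but not automatic, and your sketch does not address it. Second, the full Rellich--Pohozaev identity for $V$ produces bulk terms $\sum_{ij}(\partial_jV_i)\,\partial_iu\,\partial_j\bar u-\tfrac12(\operatorname{div}V)|\nabla u|^2$ which do not vanish: far from $\mathcal O$ the coefficient of $|u_y|^2$ is $-\tfrac12 w'<0$, and in the transition region the coefficient of $|u_y|^2$ picks up $\chi_1\chi_2'y$ of indefinite sign together with cross terms $\chi_1'\chi_2\,y\,u_xu_y$. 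You cannot absorb these bulk quantities directly by the boundary surplus $c\int_{\partial\mathcal O}|\partial_\nu u|^2$: they live on different sets, and the passage from interior quantities to $\int|\partial_\nu u|^2$ requires the intermediate identities \eqref{e:ibpe} and \eqref{e:ibpy} applied to cutoff functions, which is precisely the nontrivial chain the paper carries out and your sketch compresses into ``absorbed using the strict inequality.'' Finally, your use of strict convexity (to get $(x,y)\cdot\nu\le -c$) in fact needs only that $\mathcal O$ be convex with the origin interior, whereas the paper uses strict convexity to guarantee that $(x_M,y_M)$, $(x_m,y_m)$, $(x_\pm,y_\pm)$ are unique and that $\nu_x$ is bounded away from $0$ on the flaring set; an argument that does not exploit strict convexity in this stronger way should be viewed with suspicion, since a flat segment on $\partial\mathcal O$ parallel to the walls introduces a marginally stable bouncing-ball orbit for which one does not expect the $O(E^{1/2})$ bound.
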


\subsection{Wave asymptotics and absence of eigenvalues and embedded resonances}
In this section we assume in addition that $\Omega$ has \textit{cylindrical ends}.
Specializing to our setting, by this we mean that there is $R_0>0$ such that $\Omega \cap ([-R_0,R_0] \times \mathbb R^{d-1})$ is bounded and 
\begin{equation}\label{eq:cylend}
\Omega \cap ((-\infty,-R_0] \times \mathbb R^{d-1}) = (-\infty,-R_0] \times Y_-, \qquad \Omega \cap ([R_0,\infty) \times \mathbb R^{d-1}) = [R_0, \infty) \times Y_+,
\end{equation}
where $Y_-$ and $Y_+$ are (not necessarily connected) bounded open sets in $\mathbb R^{d-1}$.  We allow the possibility that one, but not both, of 
$Y_{\pm}$ is the empty set.

Let $f_1, \ f_2 \in C_c^\infty(\Omega)$, and let $u$  solve
\begin{equation}\label{e:weq}
 (\partial_t^2 - \Delta)u = 0, \qquad (u,\partial_tu)|_{t=0} = (f_1,f_2), \qquad u|_{\partial \Omega} = 0.
\end{equation}

We prove decay rates and asymptotics for such $u$, using results of \cite{cd2}. We begin with a wave decay rate.

\begin{thm}\label{thm:t1intro} 
Suppose that $\Omega$ satisfies the assumptions of Theorem \ref{t:cig}, Theorem \ref{t:hour}, Theorem \ref{t:flat}, or of Theorem \ref{t:convexobs}. Suppose additionally that $\Omega$ has  cylindrical ends.  Let
 $ \ f_1, \ f_2 \in C_c^\infty( \Omega)$ be given,  and let $u(t)$ solve \eqref{e:weq}. Then for any $\chi \in C_c^\infty( \Omega)$ and for any $m \in \mathbb N$ there is a constant $C$ such that
 \[
 \|\chi (u(t) - u_p(t))\|_{H^m(\Omega)} \le C t^{-1} \text{ for }t \text{ sufficiently large,}
 \]
 where $u_p(t)$ is a term corresponding to the projection of the initial data $(f_1,f_2)$ onto any eigenvalues and embedded resonances of the Dirichlet Laplacian on $\Omega$. If \eqref{e:xnux} holds, then there are no such eigenvalues and embedded resonances, and $u_p(t) \equiv 0$.
\end{thm}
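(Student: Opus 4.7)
The plan is to combine the resolvent estimates established earlier in the paper with the wave decay framework of \cite{cd2}, and then to use the Morawetz-type integration by parts that underlies those estimates to rule out eigenvalues and embedded resonances in the presence of \eqref{e:xnux}.

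\medskip

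\noindent\textbf{Wave decay.} With $\Omega$ having cylindrical ends \eqref{eq:cylend}, the continuous spectrum of $-\Delta$ is $[0,\infty)$ with thresholds at the Dirichlet eigenvalues of the cross-sections $Y_\pm$. The resolvent estimates of Theorems \ref{t:cig}, \ref{t:hour}, \ref{t:flat}, and \ref{t:convexobs} provide polynomial (indeed $O(E^{1/2})$) bounds on the cut-off resolvent for large real spectral parameter, which is precisely the type of input that the decay theorem of \cite{cd2} consumes. That theorem proceeds by contour deformation in the spectral representation of $u(t)$, combined with the standard threshold expansion in cylindrical waveguides; the combined effect is a $t^{-1}$ local energy decay for $\chi(u(t)-u_p(t))$ in $L^2$. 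Elliptic regularity for $(-\Delta-z)^{-1}$ then upgrades this to any Sobolev norm $H^m$ and gives the claimed bound.

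\medskip

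\noindent\textbf{Absence of eigenvalues and embedded resonances under \eqref{e:xnux}.} Suppose $u \not\equiv 0$ is an eigenfunction at some $E>0$, or more generally an embedded resonance state as defined in \cite{cd2}; in either case $(-\Delta-E)u=0$ on $\Omega$ with $u|_{\partial\Omega}=0$, and in each cylindrical end the Fourier-in-$y$ expansion of $u$ consists of at most finitely many purely outgoing (nondecaying) modes plus an exponentially decaying remainder. Pair the equation with the multiplier $x\partial_x\overline{u}$---the same multiplier behind the resolvent estimates---and integrate by parts on $\Omega \cap (\{|x|<R\} \times \mathbb R^{d-1})$. The Dirichlet condition combined with \eqref{e:xnux} makes the integrand along $\partial\Omega$ of favorable sign. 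Letting $R\to\infty$, the cap integrals at $x=\pm R$ either vanish (for honest eigenfunctions, which decay exponentially in each end) or are computed explicitly from the outgoing-mode expansion (for embedded resonances), using that the outgoing character of $u$ makes the leading $R$-independent contribution real and of definite sign. The resulting identity forces first the outgoing modes and then $\partial_x u$ to vanish in the ends, and finally unique continuation yields $u\equiv 0$, a contradiction. Hence $u_p\equiv 0$.

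\medskip

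The main obstacle is controlling the cap integrals at $x=\pm R$ for embedded resonance states, since the outgoing modes are not in $L^2$ and the integrands do not decay. One must evaluate the cross-sectional integrals explicitly (or average in $R$) using separation of variables on $Y_\pm$ and check that the resulting contribution has a sign compatible with the favorable $\partial\Omega$ boundary term; this is the point at which the asymptotic outgoing structure encoded in the definition of embedded resonance from \cite{cd2} is used in an essential way.
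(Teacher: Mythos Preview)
Your overall architecture matches the paper's: the wave decay part is indeed obtained by feeding the high-energy resolvent bounds into the black-box machinery of \cite{cd2}, and the absence of eigenvalues and embedded resonances is indeed proved via the Morawetz multiplier $x\partial_x$ on truncated domains $\Omega_R = \Omega \cap \{|x|<R\}$ (this is Theorem~\ref{t:noembed}). Two points, however, are handled differently in the paper and deserve correction.

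\emph{First}, in the embedded-resonance case the real-part identity alone does not suffice. For an outgoing mode $\gamma_j e^{i|x|\sqrt{E_1-\sigma_j^2}}\phi_j(y)$ with $\sigma_j^2<E_1$, the cap integrand $R(-|\nabla_y u|^2 + E_1|u|^2 + |u'|^2)$ contributes a term growing linearly in $R$, so the limit $R\to\infty$ is not finite and the contribution is neither $R$-independent nor of definite sign. The paper instead first applies the \emph{imaginary} part of the same commutator computation (equation~\eqref{eq:xrint2}), whose cap term is $\sum_\pm \pm \im \int u'\bar u$ and evaluates on outgoing modes to $\sum_{\sigma_j^2<E_1}\sqrt{E_1-\sigma_j^2}\,|\gamma_j|^2$; this forces all oscillatory coefficients $\gamma_j$ to vanish. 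Only then does the real-part identity have decaying cap terms and yield $\|u'\|^2 \le 0$.

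\emph{Second}, the conclusion from $u'\equiv 0$ is not via unique continuation. A nontrivial $x$-independent Dirichlet solution of $-\Delta u = E_1 u$ exists precisely when $\partial\Omega$ is invariant under $x$-translation, i.e.\ $\Omega = \mathbb R \times \widetilde Y$; this case is excluded by each of the hypotheses of Theorems~\ref{t:cig}--\ref{t:convexobs} (either $x>0$ or the flaring condition \eqref{e:flare}). Your appeal to unique continuation would require $u$ to vanish on an open set, which $u'\equiv 0$ alone does not give.

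Finally, a minor point on the wave-decay half: the results of \cite{cd2} take as input not the resolvent bound on the real axis but a resonance-free neighborhood of the spectrum on the Riemann surface $\hat Z$ together with a bound there; the paper passes through Theorem~\ref{t:resfree} (proved from the resolvent estimates via Vodev's identity as in \cite{cd}) to supply this. Also, the continuous spectrum begins at $\sigma_1^2$, not at $0$.
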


For a more detailed description of $u_p(t)$, see Theorem 1.1 of \cite{cd2}. The only case of Theorem \ref{thm:t1intro}  in which we do not show $u_p(t) \equiv 0$ is that of a convex obstacle inside a straight planar waveguide such that \eqref{e:xnux} does not hold.

To state our next result, let $Y$ be the disjoint union of $Y_-$ and $Y_+$, let $\Delta_Y$ be the Dirichlet Laplacian on $Y$, and let $\{\phi_j\}_{j=0}^\infty$ be a complete orthonormal set of eigenfunctions of $\Delta_Y$, with corresponding eigenvalues $\sigma_j^2$, so that
\begin{equation}\label{e:phisig}
-\Delta_Y \phi_j = \sigma_j^2 \phi_j, \qquad 0 < \sigma_1 \le \sigma_2 \le \cdots.
\end{equation} 

We get an improvement of Theorem \ref{thm:t1intro} under an additional assumption on the eigenvalues of $-\Delta_Y$.  The assumption is that there are positive constants $c_Y$ and $N_Y$, such that
\begin{equation}\label{eq:distinct}
 \sigma_{j'} - \sigma_{j} \ge c_Y \sigma_j^{-N_Y},
\end{equation}
 whenever $\sigma_{j'} > \sigma_j$. Note that this assumption allows the eigenvalues of $-\Delta_Y$ to have high multiplicities, but forbids \textit{distinct} eigenvalues from clustering too closely together.  

\begin{thm}\label{thm:intro2}
Suppose that the assumptions of Theorem \ref{thm:t1intro} hold, and also that \eqref{eq:distinct} holds.
Then for each $k_0 \ge2$ we can write
\[
 u(t)  = u_p(t) +  \sum_{k=1}^{k_0-1}t^{-1/2-k} 
\sum_{j=1}^\infty (e^{it \sigma_j}b_{j,k,+}+ e^{-it \sigma_j}b_{j,k,-}) + u_{r,k_0}(t),
\]
for some  $b_{j,k,\pm}\in C^\infty(\Omega)$, where $u_p(t)$ is as in Theorem \ref{thm:t1intro}, and where for any 
$\chi \in C_c^\infty(\Omega)$  and $m \in \mathbb N$ there is a constant $C$ so that
$$\sum_{j=0}^\infty \|\chi  b_{j,k,\pm}\|_{H^m(\Omega)}<+\infty,\; k=1,2,...,k_0-1,$$
and
\[
 \|\chi u_{r,k_0}(t)\|_{H^m(\Omega)} \le C t^{-k_0}\; \text{for $t$ sufficiently large}.
\] 
If \eqref{e:xnux} holds, then $u_p(t) \equiv 0$.
\end{thm}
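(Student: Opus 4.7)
The plan is to reduce Theorem \ref{thm:intro2} to the general wave-expansion result of our earlier paper \cite{cd2}, whose hypotheses are essentially a polynomial high-energy bound on a weighted/cutoff resolvent for a waveguide with cylindrical ends, together with a separation condition such as \eqref{eq:distinct} on the cross-sectional thresholds. The resolvent estimates \eqref{e:recig} and \eqref{e:e12} supply the high-energy input, \eqref{eq:distinct} is hypothesized directly, and when \eqref{e:xnux} holds the absence of eigenvalues and embedded resonances is already contained in Theorem \ref{thm:t1intro}, which gives $u_p(t)\equiv 0$ in that case.

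To indicate how the expansion itself arises, one writes $u(t)$ via Stone's formula on $[0,\infty)$ applied to the initial data $(f_1,f_2)$, namely as an integral in $E$ of $e^{-itE}$ times the jump of $\chi(-\Delta - E\mp i0)^{-1}\chi$. The cutoff resolvent meromorphically continues to the minimal Riemann surface on which all $\sqrt{E-\sigma_j^2}$ are single-valued, with the thresholds $\sigma_j^2$ as branch points. Deforming the spectral contour to pass around each threshold, substituting $E=\sigma_j^2+s^2$, and performing a steepest-descent evaluation of $\int e^{-it(\sigma_j^2+s^2)}P_j(s)\,ds$, where $P_j(s)$ is the local Puiseux expansion of the resolvent jump in $s$, produces the asymptotic series $\sum_k t^{-1/2-k}e^{\pm it\sigma_j}b_{j,k,\pm}$ for each $j$; the two signs correspond to the two branches of $\sqrt{E-\sigma_j^2}$. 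The off-threshold piece of the contour yields the remainder $u_{r,k_0}(t)$ of size $t^{-k_0}$ after $k_0$ integrations by parts in $E$, using the polynomial bound \eqref{e:e12}; residues at discrete spectrum or embedded resonances, if any, contribute $u_p(t)$.

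The main obstacle, and the reason the spacing assumption \eqref{eq:distinct} is required, is uniformity in the threshold index $j$. A polynomial lower bound on gaps between distinct thresholds translates into polynomial lower bounds on the radii of the local Puiseux expansions near each $\sigma_j^2$, hence polynomial-in-$\sigma_j$ bounds on the coefficients $b_{j,k,\pm}$; combined with the rapid decay in $j$ of the cross-sectional Fourier coefficients of the compactly supported data $(f_1,f_2)$ in the basis $\{\phi_j\}$, this yields the claimed absolute summability $\sum_j\|\chi b_{j,k,\pm}\|_{H^m}<\infty$. Checking that our bound $CE^{1/2}$ with weight $(1+|x|)^{-(3+\delta)/2}$ satisfies the precise quantitative hypothesis required by the theorem of \cite{cd2}, and that the uniformity in $\varepsilon\in(0,1]$ built into \eqref{e:e12} is what is needed to take $\varepsilon\to 0^+$ in the contour integral, is the only explicit verification; this is immediate from Theorems \ref{t:cig}--\ref{t:convexobs}. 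Finally, the last sentence of the theorem is inherited unchanged from Theorem \ref{thm:t1intro}.
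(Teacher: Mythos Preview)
Your overall strategy---reduce to the wave-expansion theorem of \cite{cd2}, with the resolvent bounds of Theorems~\ref{t:cig}--\ref{t:convexobs} supplying the high-energy input and Theorem~\ref{thm:t1intro} (via Theorem~\ref{t:noembed}) handling the final sentence---is the same as the paper's, and your heuristic account of the contour-deformation mechanism behind the result of \cite{cd2} is accurate.

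There is, however, one concrete step you have glossed over. The hypothesis actually needed by \cite[Theorems~3.2 and~4.1]{cd2} is not the physical-sheet bound \eqref{e:e12} alone, but a \emph{resonance-free region} on the Riemann surface $\hat Z$, together with a polynomial resolvent bound there; this is what permits the contour to be pushed across the continuous spectrum and gives the $t^{-k_0}$ remainder after integration by parts. In the paper this is isolated as Theorem~\ref{t:resfree}, which is deduced from \eqref{e:e12} via Vodev's resolvent identity as in \cite[Theorem~5.6]{cd}. Your claim that ``checking that our bound $CE^{1/2}$ \ldots\ satisfies the precise quantitative hypothesis required by the theorem of \cite{cd2}'' is ``immediate from Theorems~\ref{t:cig}--\ref{t:convexobs}'' therefore skips a genuine (if short) argument: one must first extend the bound from $\{E+i\varepsilon:\varepsilon\in(0,1]\}$ to a neighborhood of $E\pm i0$ in $\hat Z$. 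Once Theorem~\ref{t:resfree} is invoked, your proof is complete and coincides with the paper's.
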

For a more detailed description of the $b_{j,k,\pm}$, see Theorem 1.2 and Lemma 4.7 of \cite{cd2}.
In particular, Theorem \ref{thm:intro2} shows that 
\[
\|\chi (u(t)-u_p(t))\| \le C t^{-3/2}.
\]
This is sharp when $\Omega =(0,\infty) \times Y$, where $Y \subset \mathbb R^{d-1}$ is bounded, by the computation in Section 1.1 of \cite{cd2}, in particular equation (1.6) there.

The fact that if \eqref{e:xnux} holds, then $u_p(t) \equiv 0$ in Theorems~\ref{thm:t1intro} and~\ref{thm:intro2}, depends on the following result ruling out eigenvalues and real resonances.  Although this is perhaps well known
(see, for example, \cite[Theorem 3.1]{MoWe} for a similar 
result), we include a proof both for completeness and because it uses
an integration by parts identity similar  to that 
used in the proofs of  Theorems \ref{t:cig}, \ref{t:hour}, \ref{t:flat},
and \ref{t:convexobs}.

\begin{thm}\label{t:noembed}
Suppose that $\Omega$ satisfies the assumption \eqref{e:xnux} and has  cylindrical ends. Then the Dirichlet Laplacian on $\Omega$
has no eigenvalues.  For such $\Omega$, the Dirichlet Laplacian on $\Omega$ has resonances embedded in the continuous spectrum if and only if  $\Omega$ is  the product $\Real \times \widetilde{Y}$
for some set $\widetilde{Y}\subset \Real^{d-1}$.


\end{thm}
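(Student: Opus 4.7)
The statement has two parts: (i) absence of $L^2$-eigenvalues of the Dirichlet Laplacian on $\Omega$, and (ii) the characterization of when embedded resonances can exist. Both rest on the same Rellich-Morawetz integration-by-parts identity that drives Theorems~\ref{t:cig}--\ref{t:convexobs}, applied with the multiplier $x\partial_x u$.

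For (i), suppose $(-\Delta-E)u = 0$ with $u \in L^2(\Omega)$ and $u|_{\partial \Omega} = 0$. First, on each cylindrical end, separation of variables in the transverse eigenbasis $\{\phi_j^\pm\}$ together with the $L^2$ constraint forces every propagating transverse mode coefficient to vanish, so $u$ and its derivatives decay exponentially as $|x| \to \infty$. I would then compute $\real\int_{\Omega}\overline{x\partial_x u}(-\Delta-E)u\,dx\,dy = 0$, integrate by parts (with no boundary contribution at infinity, by the decay), and use the Dirichlet identities $\nabla u = (\partial_\nu u)\nu$ and $\partial_x u = (\partial_\nu u)\nu_x$ on $\partial\Omega$ to reduce this to the Morawetz-type identity
\[
 2\int_\Omega |\partial_x u|^2 \,dx\,dy = \int_{\partial\Omega} x\,|\partial_\nu u|^2\,\nu_x\,dS.
\]
By \eqref{e:xnux} the right-hand side is nonpositive, so $\partial_x u \equiv 0$. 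Hence $u$ is $y$-dependent only on each cylindrical end, and the $L^2$ condition gives $u \equiv 0$ there; elliptic unique continuation propagates this to all of $\Omega$.

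For (ii), the ``if'' direction is immediate: on $\Omega = \Real\times\widetilde Y$, each Dirichlet eigenfunction $\phi_j$ of $-\Delta_{\widetilde Y}$ with eigenvalue $\sigma_j^2$ provides a bounded, non-$L^2$ solution $u(x,y) = \phi_j(y)$ of $(-\Delta - \sigma_j^2)u = 0$, which is a threshold resonance in the sense of \cite{cd2}. For the converse, suppose $u$ is a nontrivial embedded resonance at energy $E > 0$, with outgoing transverse-mode expansion $u(x,y) \sim \sum_j a_j^\pm e^{\pm i k_j x}\phi_j^\pm(y)$ plus evanescent terms as $x \to \pm\infty$, where $k_j = \sqrt{E - (\sigma_j^\pm)^2}$ ranges over propagating modes. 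I would apply Green's identity to $u, \overline u$ on $\Omega_R = \Omega\cap\{|x|<R\}$; since $u|_{\partial\Omega} = 0$, only the cross sections at $x = \pm R$ contribute and
\[
 0 \;=\; \int_{\partial\Omega_R}\bigl(\overline u\,\partial_\nu u - u\,\partial_\nu \overline u\bigr)\,dS \;\xrightarrow{R\to\infty}\; 2i\sum_{\pm}\sum_j k_j\,|a_j^\pm|^2,
\]
forcing all propagating coefficients to vanish. Hence $u$ is purely evanescent at both ends, so $u \in L^2(\Omega)$. The Morawetz identity of (i) now forces $\partial_x u \equiv 0$, so $u = \tilde u(y)$ with $\tilde u$ solving $(-\Delta_y - E)\tilde u = 0$ on each cross section and vanishing on $\partial\Omega$. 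Setting $\widetilde Y := \{y : \Real\times\{y\}\subset \Omega\}$, a short argument shows that continuity at any slice-boundary point $(x_0, y_0) \in \partial\Omega$ with $y_0$ in the interior of another cross section forces $\tilde u(y_0) = 0$, and the resulting jump in $\nabla\tilde u$ across such a point is incompatible with $u \in H^1_{loc}(\Omega)$ unless $\tilde u\equiv 0$; thus $\Omega = \Real\times\widetilde Y$.

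\textbf{Main obstacle.} The Morawetz step is routine once the correct multiplier is identified, and matches the integration-by-parts identities used in Theorems~\ref{t:cig}--\ref{t:convexobs}. The principal technical difficulty is the threshold case $E = \sigma_j^2$ in part (ii), where $k_j = 0$, the propagating expansion degenerates, and solutions of the form $x\phi_j(y)$ can appear; the precise notion of ``outgoing'' from \cite{cd2} must be invoked and handled either by direct analysis of the Jordan-block structure or by a perturbation argument from non-threshold energies. A secondary subtlety is the final geometric step identifying $\Omega$ as a genuine product, which requires care with the regularity of $u$ across cross-sectional variations of $\partial\Omega$.
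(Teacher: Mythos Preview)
Your approach is essentially the same as the paper's: both use the Morawetz identity with multiplier $x\partial_x$ to obtain $\|u'\|^2 = \tfrac12\int_{\partial\Omega} x|\partial_\nu u|^2\nu_x \le 0$, and both kill the propagating modes first via a Wronskian/imaginary-part identity (the paper derives this as the imaginary part of the same integration-by-parts on $\Omega_R$; your Green's identity on $u,\bar u$ is an equivalent and slightly more direct route).

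The one place you diverge is the threshold case, and here the paper's treatment is simpler than what you propose. After the propagating coefficients vanish, you write ``$u$ is purely evanescent at both ends, so $u\in L^2(\Omega)$''; this is false when $E=\sigma_j^2$, since the outgoing expansion then contains the bounded, non-$L^2$ term $\gamma_j\phi_j(y)$ (not $x\phi_j(y)$ --- with the paper's outgoing convention \eqref{eq:threshres} the constant-in-$x$ solution is selected). The paper avoids both the $L^2$ claim and your Jordan-block/perturbation workaround by never passing to the full domain: it keeps the Morawetz identity on the truncated region $\Omega_R$ and simply checks that the boundary contribution at $x=\pm R$,
\[
\pm u'\bar u + R\bigl(-|\nabla_y u|^2 + E_1|u|^2 + |u'|^2\bigr),
\]
vanishes as $R\to\infty$ for threshold modes (since $u'=0$ and $-|\nabla_y u|^2+E_1|u|^2=0$ by integration by parts in $y$) and decays exponentially for evanescent modes. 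This gives $\|u'\|^2_{L^2(\Omega)}\le 0$ directly. Your final geometric step is also more elaborate than needed: once $u'\equiv 0$, the Dirichlet condition forces $\partial\Omega$ to be $x$-translation invariant, which is the paper's one-line conclusion.
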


By separation of variables and direct computation (as in Section 1.1 of \cite{cd2}) one checks that if  $\Omega = \Real \times \widetilde{Y}$ then the Dirichlet Laplacian on $\Omega$
has threshold resonances at every point in the Dirichlet spectrum of
$-\Delta_{\widetilde{Y}}$.  Theorem \ref{t:noembed} shows that no other 
sufficiently regular
domains $\Omega$ with  cylindrical ends obeying \eqref{e:xnux}  can have any poles of the Dirichlet resolvent on the real axis. Theorem \ref{t:noembed} is  a consequence of Theorem \ref{t:cig}
in case $\Omega$ has only one end; we prove the general case in Section \ref{s:noembed}. 

The proofs of Theorems  \ref{thm:t1intro} and \ref{thm:intro2} depend upon results on resonance-free regions in a neighborhood of the spectrum. Because these are more complicated to state, we present them below in Theorem~\ref{t:resfree} in Section~\ref{s:resfree}. Once Theorems  \ref{t:noembed} and \ref{t:resfree} are established, Theorems \ref{thm:t1intro} and \ref{thm:intro2} are direct consequences of Theorems 3.2 and 4.1 of \cite{cd2} (see also Theorems 1.1 and 1.2 of \cite{cd2}).


\subsection{Background and context}

A wave decay rate for star-shaped compact obstacles was proved by Morawetz in \cite{m61}, and the results there were refined and extended in many papers, including \cite{lmp63, m72, r78}, and more recently revisited and adapted to hyperbolic scattering by Hintz and Zworski in \cite{hz1, hz2}.

Our results here build on those in \cite{cd, cd2} for manifolds with cylindrical ends, which in turn are based on the spectral and scattering theory of waveguides and manifolds with  cylindrical ends developed in \cite{gold, lyf, gui, tapsit, chr95,par}. The main novelty in the present paper is the resolvent estimates in 
Theorems \ref{t:cig}, \ref{t:hour}, \ref{t:flat}, and \ref{t:convexobs}. 
These rely on integration by parts identities in the spirit of Morawetz \cite{m61}.

Waveguides appear in models of
electron motion in semiconductors and of propagation of electromagnetic and sound waves; see for example   \cite{lcm,rai,rbbh,EK,bgw}. There are many results establishing the existence of eigenvalues for waveguides, under suitable geometric conditions. Something of a survey can be found in \cite{kk}. The result in \cite{bgrs} holds in a setting in some sense opposite to ours, and shows in particular that if $\Omega \subset \mathbb R^2$ has  cylindrical ends and obeys $x\nu_x \ge0$, $x \nu_x \not\equiv 0$, then there is at least one eigenvalue. There are nonexistence results for eigenvalues in \cite{MoWe,  dp, bdk}, and another for a resonance at the bottom of the spectrum in \cite{gj}. Some weaker wave decay results (expansions up to $o(1)$ as $t \to \infty$) for planar waveguides can be found in \cite{lyf, hw}.

The resonance-free region we establish in Theorem \ref{t:resfree} is a close analogue of a corresponding region for manifolds with cylindrical ends established in \cite{cd}, and relies on  a resolvent identity due to Vodev \cite{v}. An existence result for resolvent poles (in the presence of appropriate quasimodes) on waveguides can be found in \cite{e}. Upper bounds on the number resonances for manifolds with  cylindrical ends  are given in \cite{c0}. 

Previous work also shows that our results cannot carry over directly to the case of Neumann boundary conditions. For example, in \cite{dp}, examples are given of domains $\Omega$ with cylindrical ends satisfying the hypotheses of Theorems~\ref{t:flat} and \ref{t:noembed} but whose Neumann Laplacians have eigenvalues. More simply,  if $\Omega = (0,\infty) \times (-1,1)$, then $\Omega$ satisfies the hypotheses of Theorem~\ref{t:cig}, but the Neumann Laplacian has infinitely many embedded resonances (see Section 1.1 of \cite{cd2}).

\subsection{Outline} In Section \ref{s:not} we review background regarding Sobolev spaces and establish notation. In Section \ref{s:cigar} we prove Theorem \ref{t:cig}. In Section \ref{s:hour} we prove Theorems \ref{t:hour}, \ref{t:flat}, and \ref{t:convexobs}. In Section \ref{s:noembed} we prove Theorem \ref{t:noembed}. In Section \ref{s:resfree} we obtain a resonance-free region in a neighborhood of the spectrum.

\section{Preliminaries and notation}\label{s:not}

Throughout the paper we assume that $\partial \Omega$ is  Lipschitz, and that every point $p$ on $\partial \Omega$ has a neighborhood $U_p$ such that either $U_p \cap \Omega$ is convex or $U_p \cap \partial \Omega$ is $C^{1,1}$. 

We denote by $C_c^\infty(\Omega)$ the space of functions in $C^\infty(\mathbb R^d)$ with compact support in $\Omega$, and by $C_c^\infty(\overline{\Omega})$ the space of  restrictions to $\overline \Omega$ of functions in $C^\infty(\mathbb R^d)$ with compact support in $\mathbb R^d$. We  use three different kinds of Sobolev spaces on $\Omega$. We denote by $H^k(\Omega)$ the Sobolev space of functions in $L^2(\Omega)$  whose partial derivatives up to $k$th order are in $L^2(\Omega)$, and by $H^k_0(\Omega)$ the closure of $C_c^\infty(\Omega)$ in $H^k(\Omega)$. 
We denote by $H^k_{\text{comp}}(\Omega)$ the space of functions in $H^k(\Omega)$ with compact support in $\overline \Omega$ (by \cite[Theorem 1.4.3.1]{gris} this is the same as the space of restrictions to $\Omega$ of compactly supported functions in $H^k(\mathbb R^d)$), and similarly by $L^p_{\text{comp}}(\Omega)$ the space of functions in $L^p(\Omega)$ with compact support in $\overline \Omega$.

We  integrate by parts using Green's theorem (see \cite[Theorem 1.5.3.1]{gris}). We use the fact that $C_c^\infty(\overline{\Omega})$ is dense in $H^k_{\text{comp}}(\Omega)$ (see \cite[Theorem 1.4.2.1]{gris}), and that the trace map $H^1_{\text{comp}}(\Omega) \to L^2(\partial \Omega)$ is continuous (see \cite[Theorem 1.5.1.3]{gris}).

We define the Dirichlet resolvent by taking the Friedrichs extension  of $\Delta$ with domain $C_c^\infty(\Omega)$ (see pages 82 and 83 of \cite[Chapter 8, Section 2]{tay2}). For $z \not\in [0,\infty)$ we have
\begin{equation}\label{e:resmap}
(-\Delta - z)^{-1} \colon L^2(\Omega) \to \mathcal D := \{u \in H^1_0(\Omega) \colon \Delta u \in L^2(\Omega)\}.
\end{equation}
We denote by $\mathcal D_{\text{comp}}$  the set of functions in $\mathcal D$ with compact support in $\overline \Omega$. The regularity assumption on $\partial \Omega$ is made so as to ensure that 
\begin{equation}\label{e:dh2}
 \mathcal D_{\text{comp}} = H^2_{\text{comp}}(\Omega) \cap H^1_0(\Omega).
\end{equation}
Near points on $\partial \Omega$ where $\partial \Omega$ is $C^\infty$, \eqref{e:dh2} follows from \cite[Chapter 5, Theorem 1.3]{tay}). Near points where $\partial \Omega$ is $C^{1,1}$,  \eqref{e:dh2} follows from \cite[Corollary~2.2.2.4]{gris}. Near points where  $\Omega$   is convex,  \eqref{e:dh2}  follows from \cite[Theorem~3.2.1.2]{gris} (see also \cite[Chapter~5, Section~5, Exercise~7]{tay}).

For real $E$ and $\varepsilon$ we write for brevity
\[
P = P(E, \varepsilon) = - \Delta - E - i \varepsilon.
\]

We use $\|\cdot\|$ and $\langle \cdot, \cdot \rangle$ to denote the norm and inner product on $L^2(\Omega)$, and prime to denote differentiation with respect to $x$.

\section{Domains with one end}\label{s:cigar}

We begin the proof of Theorem \ref{t:cig} with an integration by parts identity in the spirit of Morawetz and others. This identity, along with some variants of it, also plays a central role in the proofs of Theorems~\ref{t:hour}, \ref{t:flat}, \ref{t:convexobs}, and \ref{t:noembed}.

\begin{lem}\label{l:ibpcig}
Let $\Omega \subset \mathbb R^d$ be an open set such that every point $p$ on $\partial \Omega$ has a neighborhood $U_p$ such that either $U_p \cap \Omega$ is convex or $U_p \cap \partial \Omega$ is $C^{1,1}$. Let $w \in C^3(\mathbb R)$ be real valued, and suppose  $w, \ w', \ w'', \ w'''$ are all bounded. Let $u \in \mathcal D$ and let $E, \ \varepsilon \in \mathbb R$. Then
\begin{equation}\label{e:ibpm}
  \langle w'u',u'\rangle  = \frac 1 4 \langle w''' u,u\rangle + \frac 12 \re \langle Pu,(wu)'\rangle  + \frac 12 \re \langle w u',Pu\rangle +  \varepsilon \im \langle wu', u \rangle + \frac 12  \int_{\partial \Omega} w |\partial_\nu u|^2 \nu_x,
\end{equation}
where $w=w(x)$.
\end{lem}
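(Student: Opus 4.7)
The plan is to prove \eqref{e:ibpm} first for smooth $u$ of compact support in $\overline\Omega$ vanishing on $\partial\Omega$, where direct integration by parts applies, and then extend to arbitrary $u\in\mathcal D$ by density (using the density results in Section~\ref{s:not}) together with a cutoff in the $x$-variable, exploiting the boundedness of $w,w',w'',w'''$. In particular, one may multiply $u$ by a cutoff $\chi_R(x)$ equal to $1$ on $|x|<R$ and supported in $|x|<2R$, verify the identity for $\chi_R u\in\mathcal D_{\textup{comp}}$, and let $R\to\infty$.

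For such smooth $u$ I would separately evaluate, via Green's identity, the two real inner products $\re\langle -\Delta u,wu'\rangle$ and $\re\langle -\Delta u,w'u\rangle$. The Dirichlet condition $u|_{\partial\Omega}=0$ is used crucially: it forces $\nabla u=(\partial_\nu u)\nu$ on $\partial\Omega$, and in particular $u'=\nu_x\partial_\nu u$ there. Expanding $\nabla(wu')=w'u'e_1+w\partial_x\nabla u$, the second piece produces $\tfrac12 w\partial_x|\nabla u|^2$, which after integration by parts in $x$ (and the identity $|\nabla u|^2=|\partial_\nu u|^2$ on $\partial\Omega$) yields both $-\tfrac12\int w'|\nabla u|^2$ and a boundary term; that boundary term combines with the Green trace $\re\int_{\partial\Omega}(\partial_\nu u)w\overline{u'}=\int_{\partial\Omega}w\nu_x|\partial_\nu u|^2$ to leave just $-\tfrac12\int_{\partial\Omega}w|\partial_\nu u|^2\nu_x$. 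Altogether,
\[
\re\langle -\Delta u,wu'\rangle=\int_\Omega w'|u'|^2-\tfrac12\int_\Omega w'|\nabla u|^2-\tfrac12\int_{\partial\Omega}w|\partial_\nu u|^2\nu_x,
\]
while the second inner product, whose boundary contribution vanishes because $u=0$ on $\partial\Omega$, reduces after expanding $\nabla(w'u)$ and integrating $\tfrac{w''}2\partial_x|u|^2$ by parts to $\int_\Omega w'|\nabla u|^2-\tfrac12\int_\Omega w'''|u|^2$.

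To assemble \eqref{e:ibpm} I would then write $Pu=-\Delta u-(E+i\varepsilon)u$ and $(wu)'=w'u+wu'$ and expand $\re\langle Pu,(wu)'\rangle+\re\langle wu',Pu\rangle$. The Laplacian pieces sum to $\re\langle -\Delta u,w'u\rangle+2\re\langle -\Delta u,wu'\rangle$, and the two appearances of $\int w'|\nabla u|^2$ cancel exactly; the $-E$ pieces cancel because $\re\langle u,(wu)'\rangle=\tfrac12\int w'|u|^2=-\re\langle wu',u\rangle$ (both from $2\re(u'\bar u)=\partial_x|u|^2$ and one integration by parts using $u|_{\partial\Omega}=0$); and the $-i\varepsilon$ pieces combine to $-2\varepsilon\im\langle wu',u\rangle$ via $\im\langle u,(wu)'\rangle=-\im\langle wu',u\rangle$. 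Dividing by $2$ and solving for $\int w'|u'|^2$ yields \eqref{e:ibpm}. The main technical subtlety is the boundary-term bookkeeping: arranging the two Green's identity calculations so that the Dirichlet condition collapses the trace contributions into the single favorably signed term of \eqref{e:ibpm} while the bulk $\int w'|\nabla u|^2$ contributions cancel cleanly, leaving only $\int w'|u'|^2$, $\int w'''|u|^2$, the boundary integral, and the $E,\varepsilon$ corrections. The density/cutoff step is then routine given the regularity theory in Section~\ref{s:not}.
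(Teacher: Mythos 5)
Your computational core is correct and, at the level of arithmetic, is the same integration by parts as the paper's; only the organization differs. The paper phrases the computation as a positive commutator argument, computing $\langle[w\partial_x,\partial_x^2]u,v\rangle = \langle[P,w\partial_x]u,v\rangle$ two ways and symmetrizing $u=v$, whereas you compute $\re\langle -\Delta u, wu'\rangle$ and $\re\langle -\Delta u, w'u\rangle$ directly via Green's identity and then assemble $\re\langle Pu,(wu)'\rangle + \re\langle wu',Pu\rangle$. These are the multiplier and commutator forms of the same Morawetz-type identity, and your formulas for the two inner products, the cancellation of $\int w'|\nabla u|^2$, and the handling of the $E$ and $\varepsilon$ terms are all right. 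The commutator framing has the minor advantage that it naturally produces a bilinear identity in two slots $u,v$, which the paper then exploits (see below).

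Where your plan has a real gap is the density step. You propose to prove \eqref{e:ibpm} for smooth $u$, compactly supported in $\overline\Omega$, vanishing on $\partial\Omega$, and then extend to $u\in\mathcal D$ ``by density (using the density results in Section~\ref{s:not})''. But the density results cited there are that $C_c^\infty(\overline\Omega)$ is dense in $H^k_{\text{comp}}(\Omega)$ and that the trace $H^1_{\text{comp}}\to L^2(\partial\Omega)$ is continuous; neither of these says that smooth functions \emph{vanishing on the boundary} are dense in $\mathcal D_{\text{comp}}=H^2_{\text{comp}}\cap H^1_0$ in $H^2$ norm. Note in particular that $C_c^\infty(\Omega)$ (vanishing near $\partial\Omega$) is \emph{not} dense there in any norm controlling the trace of $\partial_\nu u$, since $\partial_\nu u|_{\partial\Omega}\equiv 0$ for such functions but not for general $u\in\mathcal D$; so one really does need the larger class of functions smooth up to and vanishing only on $\partial\Omega$, and its density in $\mathcal D_{\text{comp}}$ requires a separate argument for the Lipschitz/locally-convex/$C^{1,1}$ boundaries allowed here. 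The paper sidesteps this entirely: it first proves the bilinear identity \eqref{e:ibpuvlong} for $u,v\in C_c^\infty(\overline\Omega)$ with \emph{arbitrary} boundary values, extends it to $v\in H^2_{\text{comp}}$ by the cited density of $C_c^\infty(\overline\Omega)$, then \emph{specializes} to $v\in H^2_{\text{comp}}\cap H^1_0$ to kill the unwanted boundary terms and obtain \eqref{e:ibpuvshort}, extends in the $u$ slot by the same density, and only then sets $u=v$. This two-slot maneuver is the essential point your proposal misses; without it (or an independent proof of the needed density), the passage from smooth boundary-vanishing $u$ to general $u\in\mathcal D$ is not justified by what is quoted in Section~\ref{s:not}.
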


\begin{proof}
Let $u, \ v \in C_c^\infty(\overline{\Omega})$. We use a positive commutator argument with $w\partial_x$ as commutant. Computing this commutator two ways we have
\begin{equation}\label{e:wcomm}
 \langle [w\partial_x,\partial_x^2] u,v\rangle = -2\langle w'u'',v\rangle - \langle w''u',v\rangle = 2 \langle w'u',v'\rangle + \langle w''u',v\rangle - 2 \int_{\partial \Omega} w'u'\bar v \nu_x,
\end{equation} 
and
\begin{equation}\label{e:pcom1}
  \langle [w\partial_x,\partial_x^2] u,v\rangle =  \langle [P,w\partial_x] u,v\rangle = \langle Pw u',v\rangle - \langle w Pu',v\rangle.
\end{equation}
We write the right hand side of \eqref{e:pcom1} in terms of $Pu$ and $Pv$ by integrating by parts to obtain
\begin{equation}\label{e:pcom2}
 \langle Pw u',v\rangle =  \langle w u',(P+2i\varepsilon)v\rangle + \int_{\partial \Omega} \left( wu'\partial_\nu \bar v  - \partial_\nu(wu')\bar v\right),
\end{equation}
and
\begin{equation}\label{e:pcom3}
- \langle w Pu',v\rangle = \langle Pu,(wv)'\rangle - \int_{\partial \Omega} w(Pu) \bar v\nu_x.
\end{equation}
Combining \eqref{e:pcom1}, \eqref{e:pcom2}, and \eqref{e:pcom3} gives
\[
 \langle [w\partial_x,\partial_x^2] u,v\rangle = \langle Pu,(wv)'\rangle  +  \langle w u',Pv\rangle - 2 i \varepsilon \langle wu',v \rangle + \int_{\partial \Omega} \left( wu'\partial_\nu \bar v  - \partial_\nu(wu')\bar v - w (Pu)\bar v \nu_x\right).
\]
and combining also with \eqref{e:wcomm} gives
\begin{equation}\label{e:ibpuvlong}\begin{split}
 2 \langle w'u',v'\rangle  = &- \langle w''u',v\rangle + \langle Pu,(wv)'\rangle  +  \langle w u',Pv\rangle - 2 i \varepsilon \langle wu',v \rangle \\ &+  \int_{\partial \Omega} \left( wu'\partial_\nu \bar v  - \partial_\nu(wu')\bar v - w (Pu)\bar v \nu_x + 2 w'u'\bar v \nu_x\right),
\end{split}\end{equation}
for all $u, \ v \in C_c^\infty(\overline \Omega)$. By density, \eqref{e:ibpuvlong} also holds for all $v \in H^2_{\text{comp}}(\Omega)$. Now let us specialize to the case that $v \in H^2_{\text{comp}}(\Omega) \cap H^1_0(\Omega)$. Then \eqref{e:ibpuvlong} becomes
\begin{equation}\label{e:ibpuvshort}
 2 \langle w'u',v'\rangle  = - \langle w''u',v\rangle + \langle Pu,(wv)'\rangle  +  \langle w u',Pv\rangle - 2 i \varepsilon \langle wu',v \rangle  +  \int_{\partial \Omega} wu'\partial_\nu \bar v.
\end{equation}
Again by density, we also have \eqref{e:ibpuvshort} for all $u \in H^2_{\text{comp}}(\Omega)$ and $v \in H^2_{\text{comp}}(\Omega) \cap H^1_0(\Omega)$.

Specializing further to the case that $u=v$, taking real parts of both sides, and using
\[
- \re \langle w'' u', u \rangle = \frac 12 \langle w''' u, u \rangle, \qquad u'|_{\partial \Omega} = \nu_x \partial_\nu u,
\]
gives \eqref{e:ibpm} for all $u \in \mathcal D_{\text{comp}}$.
To prove \eqref{e:ibpm} for all $u \in \mathcal D$, use a partition of unity to write $u$ as a locally finite sum of functions in $\mathcal D_{\text{comp}}$.
\end{proof}

For the proof of Theorem \ref{t:cig} we will use 
\[w(x) = 1 - (1+x)^{-\delta}, \qquad x \ge 0.\]
We will need the weighted Poincar\'e type inequality
\begin{equation}\label{e:poinc}
  \|\sqrt{w'''}u\| \le   2  \frac{\sqrt{1+\delta}}{\sqrt{2+\delta}} \|\sqrt{w'} u'\|, \text{ for all }u \in H^1_0(\Omega),
\end{equation}
which is proved by writing
 \[
   \|\sqrt{w'''}u\|^2 = - 2 \re \langle w'' u', u \rangle \le 2 \|\sqrt{w'''}u\| \left\|\frac{w''}{\sqrt{w'''}}u'\right\| = 2  \frac{\sqrt{1+\delta}}{\sqrt{2+\delta}} \|\sqrt{w'''}u\|\|\sqrt{w'} u'\|.
\]

\begin{proof}[Proof of Theorem \ref{t:cig}]
We simplify \eqref{e:ibpm},  assuming additionally that
\begin{equation}\label{e:uweigh}
(1+x)^{\frac{3+\delta}2}Pu \in L^2(\Omega).
\end{equation}
Then, since $0<w \le 1$ and the last term of \eqref{e:ibpm} is nonpositive by \eqref{e:xnux}, we have
\begin{equation}\label{e:ibpcigest}
 \|\sqrt{w'} u'\|^2 \le \frac 14 \|\sqrt{w'''}u\|^2 + \left\|\frac{Pu}{\sqrt{w'}}\right\|\|\sqrt{w'}u'\| + \frac 12 \left\|\frac{w'Pu}{\sqrt{w'''}}\right\|\|\sqrt{w'''}u\| + \varepsilon \|u'\|\|u\|.
\end{equation}
We first estimate the last term on the right using \begin{equation}\label{e:ibplap}
 \langle \Delta u, u \rangle = - \|\nabla u\|^2,
\end{equation} which gives
\[
 \|u'\|^2 \le \re \langle Pu,u\rangle + E \|u\|^2 \le \left\|\frac{Pu}{\sqrt{w'''}}\right\|\|\sqrt{w'''}u\| + E \|u\|^2,
\]
and 
\[
 \varepsilon \|u\|^2 = -\im \langle Pu ,u\rangle \le \left\|\frac{Pu}{\sqrt{w'''}}\right\|\|\sqrt{w'''}u\|.
\]
Combining these gives
\[
 \varepsilon^2  \|u\|^2  \|u'\|^2 \le  \left(E + \varepsilon \right)\left\|\frac{Pu}{\sqrt{w'''}}\right\|^2\|\sqrt{w'''}u\|^2,
\]
and plugging into \eqref{e:ibpcigest} gives
\[
  \|\sqrt{w'} u'\|^2 \le \frac 14 \|\sqrt{w'''}u\|^2 + \left\|\frac{Pu}{\sqrt{w'}}\right\|\|\sqrt{w'}u'\| + \frac 12 \left\|\frac{w'Pu}{\sqrt{w'''}}\right\|\|\sqrt{w'''}u\|  + \sqrt{E + \varepsilon }\left\|\frac{Pu}{\sqrt{w'''}}\right\|\|\sqrt{w'''}u\|.
\]
Now we use the weighted Poincar\'e inequality \eqref{e:poinc} to estimate all occurrences of $\|\sqrt{w'''}u\|$ on the right by $\|\sqrt{w'} u'\|$. We then cancel a factor of $\|\sqrt{w'} u'\|$ from all terms, and  move the first term on the right over to the left.
This gives
\[
\frac 1 {2+\delta}  \|\sqrt{w'} u'\| \le \left\|\frac{Pu}{\sqrt{w'}}\right\|+ \frac{\sqrt{1+\delta}}{\sqrt{2+\delta}} \left\|\frac{w'Pu}{\sqrt{w'''}}\right\|  + 2  \frac{\sqrt{1+\delta}}{\sqrt{2+\delta}} \sqrt{E + \varepsilon}\left\|\frac{Pu}{\sqrt{w'''}}\right\|.
\]
Now use $w' \le \delta$ and $w''' \le (1+\delta)(2+\delta)w'$ to combine terms:
\[
 \|\sqrt{w'} u'\| \le 2\sqrt{(1+\delta)(2+\delta)} \left(1+\delta  + \sqrt{E+\varepsilon}\right)\left\|\frac{Pu}{\sqrt{w'''}}\right\|.
\]
Using \eqref{e:poinc} again gives
\[
  \|\sqrt{w'''}u\| \le 4(1+\delta) \left(1+\delta  + \sqrt{E+\varepsilon}\right)\left\|\frac{Pu}{\sqrt{w'''}}\right\|.
\]
Plugging in the formula for $w'''$ and using $\delta \le 1$ to simplify the constants gives
\begin{equation}\label{e:cigapr}
  \|(1+x)^{-\frac{3+\delta}2}u\| \le \frac 3 \delta(1 + \sqrt {E + \varepsilon})\left\|(1+x)^{\frac{3+\delta}2}Pu\right\|,
\end{equation}
for all $u\in \mathcal D$ satisfying \eqref{e:uweigh}. For any $v \in L^2(\Omega)$, by \eqref{e:resmap} we may substitute $u = P^{-1}(1+x)^{-\frac{3+\delta}2}v$ into this last estimate to obtain
\[
  \|(1+x)^{-\frac{3+\delta}2}P^{-1}(1+x)^{-\frac{3+\delta}2}v\|  \le \frac 3 \delta(1 + \sqrt {E + \varepsilon})\|v\|,
  \]
for all $E \ge 0$ and $\varepsilon>0$.

Applying the Phragm\'en--Lindel\"of principle to the functions
\[
 z \mapsto \langle (1+x)^{-\frac{3+\delta}2}(-\Delta - z)^{-1}(1+x)^{-\frac{3+\delta}2}u,v\rangle /(1+\sqrt{-z}), \qquad u, \ v \in L^2(\Omega),
\]
in the sectors
\[
 \{z \in \mathbb C \mid \alpha \re z < | \im z| \}, \qquad \alpha > 0,
\]
(as in e.g. the end of the proof of (1.6) of \cite{cd}) gives
the conclusion.
\end{proof}

\section{Domains  with multiple ends}\label{s:hour}

The proofs of Theorems \ref{t:hour}, \ref{t:flat}, and \ref{t:convexobs} are more elaborate versions of the proof of Theorem \ref{t:cig}.  In comparison with the setting of Theorem \ref{t:cig}, we still have the integration by parts identity \eqref{e:ibpm}, with which we control $u'$. However, we  no longer have $x>0$ throughout $\Omega$, and hence can no longer use the weighted Poincar\'e type inequality \eqref{e:poinc} to estimate $u$  purely in terms of $u'$. To compensate for this, we apply \eqref{e:poinc} to a cut-off version of $u$ in~\eqref{e:ftcpoinc} below. For the proofs of Theorems~\ref{t:hour} and~\ref{t:flat}, we choose the cut-off such that
the resulting remainder term is supported where we have the flaring estimate~\eqref{e:flare}. We then use variants of \eqref{e:ibpm}, proved in Lemmas~\ref{l:ibp2} and~\ref{l:ibp3} below, to control $u$ in the flaring region by $\partial_\nu u$ on the boundary of the flaring region, and then the flaring estimate \eqref{e:flare} and the original integration by parts identity \eqref{e:ibpm} to control $\partial_\nu u$ on the boundary of the flaring region.

We begin by proving the new integrations by parts identities that we will need.

\begin{lem}\label{l:ibp2} Let $\mu = \mu(x)$ be $C^1$, real valued, and bounded with bounded derivative. Let $u \in \mathcal D$, and let $E, \ \varepsilon \in \mathbb R$. Then
\begin{equation}\label{e:ibpe}
 \langle \mu' u', u' \rangle  + E \langle \mu' u, u \rangle =  2 \re \langle \mu Pu, u' \rangle - 2 \varepsilon \im \langle \mu u, u' \rangle + \sum_{j=1}^{d-1}\langle \mu' \partial_{y_j} u, \partial_{y_j} u \rangle + \int_{\partial \Omega} \mu |\partial_\nu u|^2 \nu_x
 \end{equation}

\end{lem}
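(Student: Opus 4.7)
The strategy mirrors the proof of Lemma~\ref{l:ibpcig} but is in fact simpler: instead of running a commutator argument with two functions, I would directly pair $Pu$ with $\mu u'$ and integrate by parts. For $u \in C_c^\infty(\overline{\Omega}) \cap H^1_0(\Omega)$, expanding $P = -\Delta - E - i\varepsilon$ gives
\[
2 \re \langle \mu P u, u' \rangle = -2 \re \langle \mu \Delta u, u' \rangle - 2 E \re \langle \mu u, u' \rangle + 2 \varepsilon \im \langle \mu u, u' \rangle.
\]
The middle term simplifies via $2 \re(u \overline{u'}) = \partial_x |u|^2$; integrating by parts in $x$ and using $u|_{\partial \Omega}=0$ to kill the boundary yields $-2 E \re \langle \mu u, u'\rangle = E \langle \mu' u, u \rangle$. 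For the Laplacian I would split $-\Delta = -\partial_x^2 - \sum_{j=1}^{d-1} \partial_{y_j}^2$ and handle each summand separately. The $\partial_x^2$ contribution gives $\langle \mu' u', u' \rangle - \int_{\partial \Omega} \mu |u'|^2 \nu_x$ via a single $x$-integration by parts, and each $\partial_{y_j}^2$ contribution gives, after integrating by parts first in $y_j$ and then in $x$ (using that $\mu = \mu(x)$ commutes with $\partial_{y_j}$), the three-piece expression
\[
-\langle \mu' \partial_{y_j} u, \partial_{y_j} u \rangle + \int_{\partial \Omega} \mu |\partial_{y_j} u|^2 \nu_x - 2 \re \int_{\partial \Omega} \mu (\partial_{y_j} u) \overline{u'} \nu_{y_j}.
\]

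The decisive step is collecting all the boundary contributions. Since $u = 0$ on $\partial \Omega$, the gradient there satisfies $\nabla u = (\partial_\nu u) \nu$, so $u' = \nu_x \partial_\nu u$ and $\partial_{y_j} u = \nu_{y_j} \partial_\nu u$. Substituting and summing over $j$ produces
\[
\int_{\partial \Omega} \mu |\partial_\nu u|^2 \nu_x \Bigl(-\nu_x^2 + \sum_{j=1}^{d-1} \nu_{y_j}^2 - 2 \sum_{j=1}^{d-1} \nu_{y_j}^2 \Bigr) = -\int_{\partial \Omega} \mu |\partial_\nu u|^2 \nu_x\, |\nu|^2 = -\int_{\partial \Omega} \mu |\partial_\nu u|^2 \nu_x,
\]
using $|\nu|^2 = 1$. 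Moving this across and combining the interior pieces $\langle \mu' u',u'\rangle$, $E\langle \mu' u,u\rangle$, and $-\sum_j \langle \mu' \partial_{y_j} u, \partial_{y_j} u\rangle$ with the $\varepsilon$-term then produces exactly \eqref{e:ibpe} for smooth $u$.

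Finally, I would extend the identity to arbitrary $u \in \mathcal D$ exactly as in the last step of the proof of Lemma~\ref{l:ibpcig}: first approximate in $H^2_{\text{comp}}(\Omega)$ using density of $C_c^\infty(\overline{\Omega})$ together with the continuity of the trace map $H^1_{\text{comp}}(\Omega) \to L^2(\partial \Omega)$ to handle the boundary integral; since only $\mu'$ (not higher derivatives of $\mu$) appears, the hypothesis $\mu \in C^1$ with bounded $\mu, \mu'$ is enough for every term to depend continuously on $u$ in the $H^2_{\text{comp}}$ topology. Then pass from $\mathcal D_{\text{comp}}$ to $\mathcal D$ by writing $u$ as a locally finite sum via a partition of unity. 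I do not anticipate a serious obstacle; the only real bookkeeping risk is the boundary sign computation above, which is where the Dirichlet condition and $|\nu|=1$ both play essential roles.
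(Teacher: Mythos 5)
Your proposal is correct and follows essentially the same route as the paper: the paper organizes the computation by adding three integration-by-parts identities (for $\langle\mu'u',u'\rangle$, for each $\langle\mu'\partial_{y_j}u,\partial_{y_j}u\rangle$, and for $E\langle\mu'u,u\rangle$) and then imposes the Dirichlet condition to reduce the boundary terms via $u'|_{\partial\Omega}=\nu_x\partial_\nu u$, $\partial_{y_j}u|_{\partial\Omega}=\nu_{y_j}\partial_\nu u$ and $|\nu|=1$, which is the same computation you carry out starting from $2\re\langle\mu Pu,u'\rangle$; the density step and the partition-of-unity extension to $\mathcal D$ also match. One small organizational point: it is cleaner to first derive the identity with full boundary terms for $u\in C_c^\infty(\overline\Omega)$, pass to $H^2_{\mathrm{comp}}(\Omega)$ by density, and only then specialize to $\mathcal D_{\mathrm{comp}}$ to invoke the Dirichlet condition, rather than imposing $u|_{\partial\Omega}=0$ from the outset as your sketch does.
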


\begin{proof}
Let $u \in C_c^\infty(\overline{\Omega})$. Adding together the identities
\[\begin{split}
 \langle \mu' u', u' \rangle &= -2 \re \langle \mu u'', u' \rangle + \int_{\partial \Omega} \mu |u'|^2 \nu_x, \\
- \langle \mu' \partial_{y_j} u, \partial_{y_j} u \rangle &=  2 \re \langle \mu \partial_{y_j} u, \partial_{y_j} u' \rangle  - \int_{\partial \Omega} \mu |\partial_{y_j} u |^2\nu_x \\ &= -2 \re \langle \mu \partial_{y_j}^2 u, u' \rangle 
+  \int_{\partial \Omega}\left( 2 \re \mu \partial_{y_j} u \bar u' \nu_{y_j} -\mu |\partial_{y_j} u |^2\nu_x \right),\\
E \langle \mu' u, u \rangle &= - 2 \re \langle \mu E u, u' \rangle + E \int_{\partial \Omega} \mu |u|^2 \nu_x. 
\end{split}\]
gives
\[
\begin{split}
 \langle \mu' u', u' \rangle  + E \langle \mu' u, u \rangle =  &2 \re \langle \mu Pu, u' \rangle - 2 \varepsilon \im \langle \mu u, u' \rangle + \sum_{j=1}^{d-1}\langle \mu' \partial_{y_j} u, \partial_{y_j} u \rangle \\&+ \int_{\partial \Omega}
 \Big( \mu |u'|^2 \nu_x + E\mu |u|^2 \nu_x +\sum_{j=1}^{d-1} \left(2 \re \mu \partial_{y_j} u \bar u' \nu_{y_j} -\mu |\partial_{y_j} u |^2\nu_x \right)\Big).
\end{split}
 \]
By density this holds for $u \in H^2_{\text{comp}}$, and specializing to $u \in \mathcal D_{\text{comp}}$ and using \[
u'|_{\partial \Omega} = \nu_x \partial_\nu u, \qquad \partial_{y_j} u|_{\partial \Omega} = \nu_{y_j} \partial_\nu u,
\]
gives \eqref{e:ibpe} for $u \in \mathcal D_{\text{comp}}$. To prove \eqref{e:ibpe} for all $u \in \mathcal D$, use a partition of unity to write $u$ as a locally finite sum of functions in $\mathcal D_{\text{comp}}$. 
\end{proof}

\begin{lem}\label{l:ibp3}
Let $u , \  y_j u \in \mathcal D$, let $E, \ \varepsilon \in \mathbb R$, and let $j \in \{1, \dots, d-1\}$. Then
\begin{equation}\label{e:ibpy}
 \|\partial_{y_j} u\|^2  = \frac 12 \re \langle Pu, \partial_{y_j} (y_j u) \rangle  +  \frac 12 \re \langle y_j \partial_{y_j} u, P  u \rangle + \varepsilon \im \langle y_j \partial_{y_j} u,   u \rangle + \frac 12 \int_{\partial \Omega}  y_j|\partial_\nu  u|^2 \nu_{y_j} .
\end{equation}
\end{lem}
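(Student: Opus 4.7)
The identity \eqref{e:ibpy} is structurally the $y_j$-analog of Lemma \ref{l:ibpcig} in which the commutant $w(x)\partial_x$ is replaced by the commutant $y_j \partial_{y_j}$, that is, one takes the linear weight $w = y_j$ so that $w'=1$ and $w''=w'''=0$, and replaces $\partial_x$ by $\partial_{y_j}$ throughout. With those substitutions, \eqref{e:ibpm} becomes precisely \eqref{e:ibpy} (the $w'''$-term drops out, and the boundary weight $w\nu_x$ becomes $y_j\nu_{y_j}$). So the plan is to mimic the proof of Lemma \ref{l:ibpcig}, taking care of the one new feature, namely that the weight $y_j$ is unbounded.

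First I would fix $u, v \in C_c^\infty(\overline{\Omega})$ and compute $\langle [P, y_j \partial_{y_j}] u, v\rangle$ in two ways. Because $y_j \partial_{y_j}$ acts only in the single variable $y_j$, it commutes with $\partial_x^2$, with $\partial_{y_k}^2$ for $k\ne j$, and with the multiplication operators $E$ and $i\varepsilon$; a short computation gives $[P, y_j\partial_{y_j}] = -2\partial_{y_j}^2$. Integrating by parts on one hand, and writing the commutator as $\langle P y_j\partial_{y_j} u, v\rangle - \langle y_j\partial_{y_j} Pu, v\rangle$ and moving $P$ onto $v$ on the other, exactly as in \eqref{e:pcom2}--\eqref{e:pcom3} (with $w$ replaced by $y_j$ and $\partial_x$ by $\partial_{y_j}$), produces the analog of \eqref{e:ibpuvlong}. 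I would then pass to $v \in H^2_{\mathrm{comp}}(\Omega) \cap H^1_0(\Omega)$ by density and use this to kill the extra boundary terms coming from $v$, just as in the passage from \eqref{e:ibpuvlong} to \eqref{e:ibpuvshort}. Setting $v = u$, taking real parts, and using $\partial_{y_j} u|_{\partial\Omega} = \nu_{y_j}\partial_\nu u$ yields \eqref{e:ibpy} for $u \in \mathcal D_{\mathrm{comp}}$ (and in fact automatically $y_j u \in \mathcal D_{\mathrm{comp}}$ in that case).

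The one step that does not quite go through identically is the final passage from $\mathcal D_{\mathrm{comp}}$ to the full hypothesis $u, y_j u \in \mathcal D$, and this is the main obstacle. In Lemma \ref{l:ibpcig} one could use a partition of unity to write a general $u \in \mathcal D$ as a locally finite sum in $\mathcal D_{\mathrm{comp}}$ because the weight $w$ and its derivatives were bounded; here, $y_j$ grows and so the terms in \eqref{e:ibpy} are not manifestly absolutely convergent without some hypothesis. This is precisely what the assumption $y_j u \in \mathcal D$ provides: combined with $u \in \mathcal D$ it forces $y_j \partial_{y_k} u \in L^2$ for every $k$ and $y_j \Delta u \in L^2$, so that all four volume integrals and the boundary integral in \eqref{e:ibpy} converge absolutely. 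I would therefore choose a cutoff sequence $\chi_n \in C_c^\infty(\mathbb R^d)$ with $\chi_n \to 1$ and $|\nabla \chi_n|, |y_j \nabla \chi_n|$ bounded with supports retreating to infinity, apply the already-established identity to $\chi_n u$, and pass to the limit term by term, using dominated convergence and the fact that commutator remainders of the form $\langle (\Delta \chi_n) u, \cdot\rangle$ and $\langle (\nabla \chi_n \cdot \nabla u), \cdot\rangle$ tend to zero thanks to $u, y_j u \in \mathcal D$. This yields \eqref{e:ibpy} in full generality.
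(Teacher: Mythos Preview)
Your proposal is correct and follows precisely the paper's approach: the paper's proof of Lemma~\ref{l:ibp3} is a one-line reference to Lemma~\ref{l:ibpcig}, replacing the commutator $[w\partial_x,\partial_x^2]$ by $[y_j\partial_{y_j},\partial_{y_j}^2]$, which is exactly what you do. You additionally spell out the passage from $\mathcal D_{\mathrm{comp}}$ to general $u$ with $u,\,y_ju\in\mathcal D$ via a cutoff sequence, which the paper leaves implicit; this extra care is appropriate since the weight $y_j$ is unbounded, and your argument is sound.
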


\begin{proof}
This is proved in the same way as Lemma \ref{l:ibpcig}, but with the commutator $[w \partial_x,\partial_x^2]$ replaced by $[y_j \partial_{y_j}, \partial_{y_j}^2]$.
\end{proof}

\begin{proof}[Proof of Theorem \ref{t:hour}]

Fix $x_0 \in I$ and a closed interval $[x_0-r,x_0+r] \subset I$ and cut-off functions $\chi_0, \ \chi_1, \ \chi_2, \ \chi_3 \in C_c^\infty(I)$, taking values in $[0,1]$, such that $\chi_0 \equiv 1$ near $[x_0-r,x_0+r]$ and  $\chi_{j+1}\equiv 1$ near $\supp \chi_j$ for $j = 0, \ 1, \ 2$. Below we abbreviate $\chi_j(x)$ as $\chi_j$.

We claim that the weighted Poincar\'e inequality \eqref{e:poinc} implies
\begin{equation}\label{e:ftcpoinc}
 \|(1 + |x - x_0|)^{-\frac{3+\delta}2} u\| \le  \frac{2}{2+\delta} \|(1+|x-x_0|)^{-\frac{1+\delta}2}u'\|  + C\|\chi_1 u\|.
\end{equation}
Indeed, by \eqref{e:poinc} we have
\begin{equation}\label{eq:pe2}
 \|(1 + |x-x_0|)^{-\frac{3+\delta}2}(1-\chi_0) u\| \le \frac{2}{2+\delta} \|(1+|x-x_0|)^{-\frac{1+\delta}2}(1-\chi_0)u'\| +   \frac{2}{2+\delta} \|(1+|x-x_0|)^{-\frac{1+\delta}2}\chi_0'u\|,
\end{equation}
which combined with $\|(1 + |x - x_0|)^{-\frac{3+\delta}2}\chi_0 u\| \le \|\chi_1 u\|$ gives \eqref{e:ftcpoinc}. 

Now apply \eqref{e:ibpm}  with $w \in C^3(\mathbb R)$ such that
\begin{itemize}
\item $w'(x)>0$ and $x w(x) \ge 0$ for all $x$,
\item and  $w'(x) = \delta (1+|x-x_0|)^{-1-\delta}$ when $|x - x_0| \ge r$.
\end{itemize}
Note that, with this choice of $w$, and for any fixed $\gamma > 0$, by \eqref{e:poinc} the first term on the right hand side of \eqref{e:ibpm} obeys 
\begin{equation}\label{e:w'''}\begin{split}
\frac 1 4 \langle w''' u, u \rangle &\le \left(\frac {\delta(\delta+1)(\delta+2)}4 + \gamma\right)  \|(1 + |x-x_0|)^{-\frac{3+\delta}2}(1-\chi_0) u\|^2 + C \|\chi_0 u\|^2 \\
 &\le \left(\frac {\delta(\delta+1)}2 + \frac {2\gamma}{\delta+2}\right)  \|(1 + |x-x_0|)^{-\frac{1+\delta}2}(1-\chi_0) u'\|^2 + C \|\chi_1 u\|^2.
\end{split}\end{equation}
As long as $\delta<1$ (which we may assume without loss of generality), we can choose $\gamma$ small enough that 
\[
\frac {\delta(\delta+1)}2 + \frac {2\gamma}{\delta+2} < \delta.
\]
With $\gamma$ so chosen, after plugging \eqref{e:w'''} into \eqref{e:ibpm} we can subtract the first term on the right hand side of \eqref{e:w'''} to the left of \eqref{e:ibpm}, to obtain
\begin{equation}\label{eq:u'andbd}
 \|(1 + |x|)^{-\frac{1+\delta}2} u'\|^2 - \int_{\partial \Omega}
 w |\partial_\nu u|^2 \nu_x \lesssim \langle |Pu|, |u| + |u'| \rangle + \varepsilon \langle |u'|, |u| \rangle + \|\chi_1 u\|^2,
\end{equation}
where, here and below, the implicit constants in $\lesssim$ are uniform for  $u \in \mathcal D$,  $\varepsilon \in (0,1]$, and  $E \gg 1$ large enough. Then, using the fact that $w \nu_x \le 0$ everywhere by \eqref{e:xnux}, and  that \eqref{e:flare} implies $\nu_xw \le -1/C < 0$ on $\supp \chi_3$, we obtain
\begin{equation}\label{e:chi3first}
 \|(1 + |x|)^{-\frac{1+\delta}2} u'\|^2  + \int_{\partial \Omega} |\chi_3 \partial_\nu u|^2 \lesssim \langle |Pu|, |u| + |u'| \rangle + \varepsilon \langle |u'|, |u| \rangle + \|\chi_1 u\|^2.
\end{equation}
Adding \eqref{e:ftcpoinc} gives
\begin{equation}\label{e:chi1right}
 \|(1 + |x|)^{-\frac{3+\delta}2} u\|^2 +  \|(1 + |x|)^{-\frac{1+\delta}2} u'\|^2  + \int_{\partial \Omega} |\chi_3 \partial_\nu u|^2 \lesssim \langle |Pu|, |u| + |u'| \rangle + \varepsilon \langle |u'|, |u| \rangle + \|\chi_1 u\|^2.
\end{equation}

The first two terms on the right side of \eqref{e:chi1right} will be handled later as in the proof of Theorem \ref{t:cig}. To handle the last term apply \eqref{e:ibpe} with $\mu$ chosen nondecreasing such that $x\mu(x) \ge 0$, $\mu'=1$ near $\supp \chi_1$, and  $\supp \mu'\subset \chi_2^{-1}(1)$. After discarding two terms with a favorable sign, that gives
\begin{equation}\label{e:eest}
E \|\chi_1 u\|^2 \le E \langle \mu' u , u \rangle  \lesssim \langle |Pu|, |u'| \rangle + \varepsilon \langle |u|,|u'| \rangle + \sum_{j=1}^{d-1}\|\chi_2 \partial_{y_j} u\|^2,
\end{equation}
which, combined with \eqref{e:chi1right}, gives
\begin{equation}\label{e:partialyright}\begin{split}
  \|(1 + |x|)^{-\frac{3+\delta}2} u\|^2 +  &\|(1 + |x|)^{-\frac{1+\delta}2} u'\|^2  + \int_{\partial \Omega} |\chi_3\partial_\nu u|^2 \lesssim \\ &\langle |Pu|, |u| + |u'| \rangle + \varepsilon \langle |u'|, |u| \rangle + E^{-1}\sum_{j=1}^{d-1}\|\chi_2 \partial_{y_j} u\|^2.
  \end{split}
\end{equation}

Now apply \eqref{e:ibpy}, with $u$ replaced by $\chi_2 u$, and note that $[\partial_{y_j},\chi_2]=0$, to get, for each $j$,
\begin{equation}\label{e:chi20}
 \|\chi_2 \partial_{y_j} u\|^2 \lesssim \langle |\chi_2Pu| +  |[P,\chi_2]u|,|\chi_2u| + |\chi_2\partial_{y_j} u|\rangle  + \varepsilon \langle |\chi_2 \partial_{y_j} u|, |\chi_2 u| \rangle + \int_{\partial \Omega} |\chi_2\partial_\nu u|^2,
\end{equation}
which implies
\begin{equation}\label{e:chi21}
 \|\chi_2 \partial_{y_j} u\|^2 \lesssim \langle |\chi_2Pu|,|\chi_2u| + |\chi_2\partial_{y_j} u|\rangle + \varepsilon \langle |\chi_2 \partial_{y_j} u|, |\chi_2 u| \rangle + \|\chi_3 u\|^2 + \|\chi_3 u'\|^2 + \int_{\partial \Omega} |\chi_2\partial_\nu u|^2.
\end{equation}
This in turn implies, since $\varepsilon \in (0,1]$,
\begin{equation}\label{e:chi22}
 \|\chi_2 \partial_{y_j} u\|^2 \lesssim  \|\chi_2Pu\|^2 + \|\chi_3 u\|^2 + \|\chi_3 u'\|^2 + \int_{\partial \Omega} |\chi_2\partial_\nu u|^2.
\end{equation}
Plugging \eqref{e:chi22} into \eqref{e:partialyright} gives, for $E$ large enough,
\[
   \|(1 + |x|)^{-\frac{3+\delta}2} u\|^2 +  \|(1 + |x|)^{-\frac{1+\delta}2} u'\|^2  + \int_{\partial \Omega} |\chi_3\partial_\nu u|^2 \lesssim \langle |Pu|, |u| + |u'| \rangle + \varepsilon \langle |u'|, |u| \rangle + E^{-1}\|\chi_2 Pu\|^2.
\]
We now estimate the first two terms on the right in the same way that the last three terms in \eqref{e:ibpcigest} were estimated in the proof of Theorem \ref{t:cig}. Then dropping the last two terms on the left gives
\[
 \| (1 + |x|)^{-\frac{3+\delta}2} u\|^2 \lesssim E \| (1 + |x|)^{\frac{3+\delta}2} Pu\|^2.
\]
\end{proof}

\begin{proof}[Proof of Theorem \ref{t:flat}]  Here we use coordinates $(x,y)\in \Real^2$. We begin as in the proof of Theorem \ref{t:hour}, but when we get up to the analogue of \eqref{e:chi3first} we have instead
\[
 \|(1 + |x|)^{-\frac{1+\delta}2} u'\|^2  + \int_{\Gamma_F} |\chi_3 \partial_\nu u|^2 \lesssim \langle |Pu|, |u| + |u'| \rangle + \varepsilon \langle |u'|, |u| \rangle + \|\chi_1 u\|^2,
 \]
 that is, the integral over $\partial \Omega$ is replaced by an integral over $\Gamma_F$. We then proceed as before, up to the analogue of \eqref{e:partialyright}, where we have instead
\begin{equation}\label{e:partialyrightflat}\begin{split}
  \|(1 + |x|)^{-\frac{3+\delta}2} u\|^2 +  &\|(1 + |x|)^{-\frac{1+\delta}2} u'\|^2  + \int_{\Gamma_F} |\chi_3\partial_\nu u|^2 \lesssim \\ &\langle |Pu|, |u| + |u'| \rangle + \varepsilon \langle |u'|, |u| \rangle + E^{-1}\|\chi_2 \partial_{y} u\|^2;
  \end{split}
\end{equation}

At this stage it does not work to apply \eqref{e:ibpy} alone as in the proof of Theorem~\ref{t:hour}, with $u$ replaced by $\chi_2 u$, as this produces a remainder $\int_{\partial \Omega} |\chi_2\partial_\nu u|^2$ on the right which cannot be handled by the $\int_{\Gamma_F} |\chi_3\partial_\nu u|^2$ we have on the left. To deal with this we will remove the part of the remainder over $\partial \Omega \setminus \Gamma_F$ using a multiple of the identity
\begin{equation}\label{e:ak}
0 = \re \langle Pu, \partial_y u\rangle + \varepsilon \im \langle \partial_y u, u \rangle + \frac 12 \int_{\partial \Omega} |\partial_\nu u|^2 \nu_y,
\end{equation}
which is just \eqref{e:ibpy} with the commutator $[y\partial_y,\partial_y^2]$ replaced by $[\partial_y,\partial_y^2] =0$.

More precisely, define cut-offs $\psi_k \in C_c^\infty(\overline{\Omega})$ such that $\psi_k = \chi_2$ on $\Omega_k$ and $\psi_k=0$ otherwise. Multiply $a_k$ by \eqref{e:ak} applied to $\psi_k u$, and subtract the result from \eqref{e:ibpy} applied to $\psi_k u$. Then use $[\psi_k,\partial_y]=0$ as in \eqref{e:chi20} to get
\[
 \|\psi_k \partial_{y} u\|^2 \lesssim \langle |\psi_kPu| +  |[P,\psi_k]u|,|\psi_ku| + |\psi_k\partial_y u|\rangle  + \varepsilon \langle |\psi_k \partial_{y} u|, |\psi_k u| \rangle + \int_{\Gamma_F} |\psi_k\partial_\nu u|^2,
\]
Estimating as in \eqref{e:chi21} and \eqref{e:chi22} gives
\[
  \|\psi_k \partial_{y} u\|^2 \lesssim  \|\psi_k Pu\|^2 + \|\chi_3 u\|^2 + \|\chi_3 u'\|^2 + \int_{\Gamma_F} |\psi_k\partial_\nu u|^2.
\]
Summing in $k$, and plugging into \eqref{e:partialyrightflat}, gives
\[
   \|(1 + |x|)^{-\frac{3+\delta}2} u\|^2 +  \|(1 + |x|)^{-\frac{1+\delta}2} u'\|^2  + \int_{\Gamma_F} |\chi_3\partial_\nu u|^2 \lesssim \langle |Pu|, |u| + |u'| \rangle + \varepsilon \langle |u'|, |u| \rangle + E^{-1}\|\chi_2 Pu\|^2,
\]
for $E$ large enough, after which we conclude as in the end of the proof of Theorem \ref{t:hour}.
\end{proof}

The proof of Theorem \ref{t:convexobs} is a further elaboration of the same ideas. The key point is that in the proofs of Theorems~\ref{t:hour} and \ref{t:flat}  above we did not use \eqref{e:xnux} directly, but rather used it to construct $w$ such that $w \nu_x \ge 0$, $w' >0$, and such that $w' = \delta(1+|x-x_0|)^{-1-\delta}$ away from the flaring set $I$. For a suitable (e.g. symmetric) convex obstacle in a straight planar waveguide this was done in Theorem \ref{t:flat}. For a more general convex obstacle a more complicated construction of $w$ is needed, and the set $I$ will consist of three intervals chosen in the projection of the obstacle onto the $x$-axis and avoiding the points where $\nu_x=0$: see Figure~\ref{f:comarked}.

\begin{figure}[h]
\labellist
\pinlabel $\scriptstyle (x_+,y_+)$ [l] at 450 260 
\pinlabel $\scriptstyle (x_-,y_-)$ [l] at 30 122
\pinlabel $\scriptstyle (x_M,y_M)$ [l] at 382 312 
\pinlabel $\scriptstyle (x_m,y_m)$ [l] at 110 65
\pinlabel $\scriptstyle x_1$ [l] at 113 5
\pinlabel $\scriptstyle x_2$ [l] at 155 5
\pinlabel $\scriptstyle x_3$ [l] at 260 5
\pinlabel $\scriptstyle x_4$ [l] at 370 5
\pinlabel $\scriptstyle x_5$ [l] at 413 5
\endlabellist
\includegraphics[width=10cm]{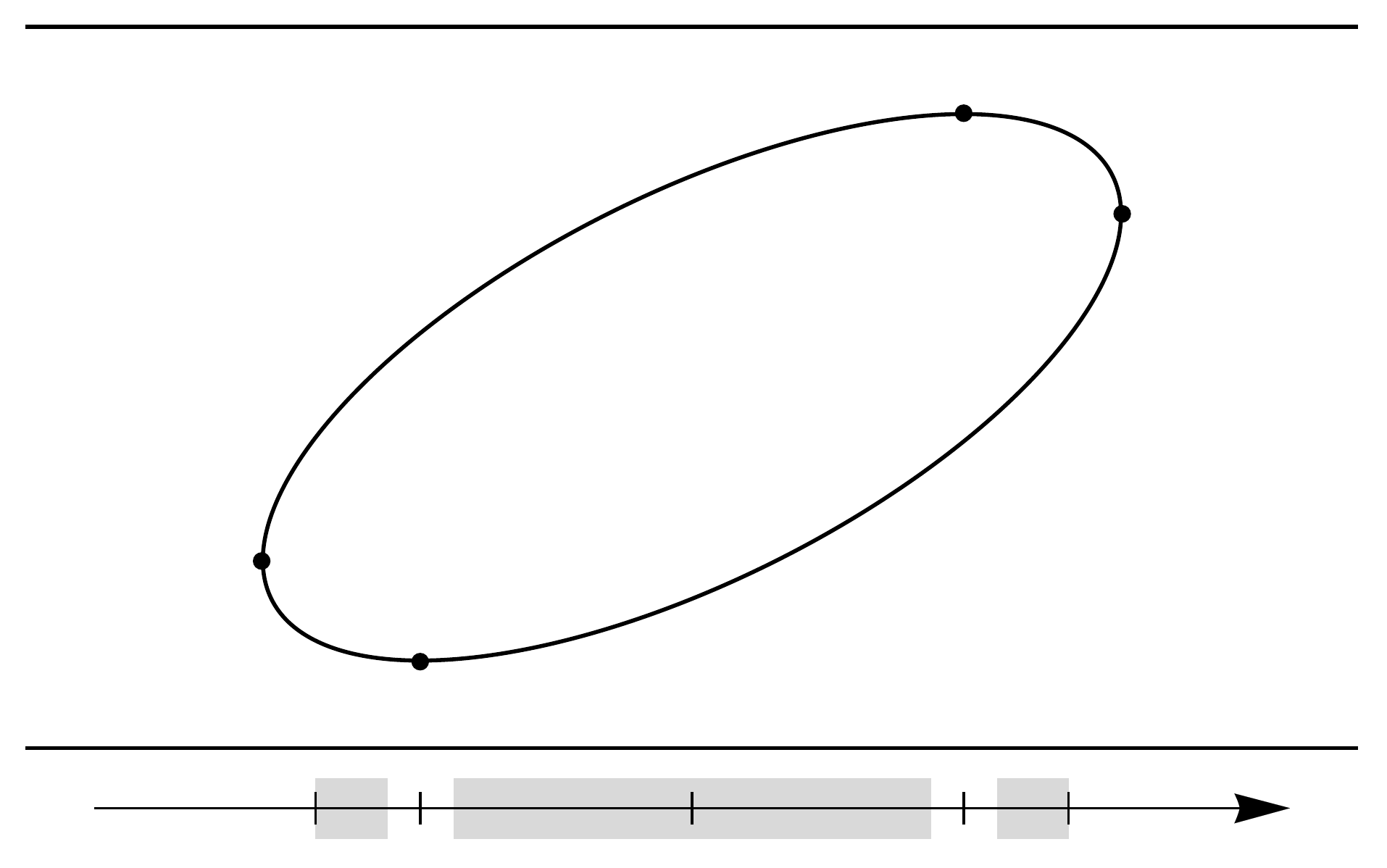}
 \caption{Notation used in the proof of Theorem \ref{t:convexobs}. The shaded gray subset of $\mathbb R$ is $K$, and $I$ is a small neighborhood of $K$.}\label{f:comarked}
\end{figure}

The function $w$ will be constructed from preliminary functions $w_+$ and $w_-$ adapted to the upper and lower parts of $\Omega$ respectively. 
\begin{lem}\label{l:wpm}
Let $\delta>0$, and let $-\infty<x_1<x_2<x_3<x_4<x_5<\infty$.  Then
there are functions $w_\pm \in C^3(\Real)$ so that:
\begin{itemize}
\item  For all $x\in \Real$, $w_\pm'(x)>0$.
\item If $x<x_1$ or $x>x_5$, then $w_+(x)=w_-(x)$ and 
$w_+'(x)=w_-'(x)= \delta(1+|x-x_3|)^{-\delta -1}$.
\item Each of $x_2$ and $x_4$ is contained in an open interval on which \\ 
$w_+'(x)=w_-'(x)= \delta(1+|x-x_3|)^{-\delta -1}.$
\item The equalities $w_+(x_4)=0$ and $w_-(x_2)=0$ hold.
\end{itemize}
\end{lem}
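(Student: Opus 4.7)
The plan is to construct $w_+$ as an antiderivative of a smoothed version of $\phi(x):=\delta(1+|x-x_3|)^{-\delta-1}$, and then to obtain $w_-$ by adding a compactly supported $C^3$ correction to the same antiderivative. The benefit of this structure is that bullets one, two, and three are built in automatically, leaving only the normalization $w_-(x_2)=0$ and the positivity of $w_-'$ to be arranged by choosing the correction.

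First I would choose open intervals $J_2\ni x_2$ and $J_4\ni x_4$ with $\overline{J_2}\subset(x_1,x_3)$ and $\overline{J_4}\subset(x_3,x_5)$, small enough that certain integral inequalities below hold, and set $I_1=(x_1,\inf J_2)$, $I_2=(\sup J_2,\inf J_4)$, $I_3=(\sup J_4,x_5)$. The function $\phi$ is continuous and positive but only $C^1$ at $x_3$, so I pick $\tilde\phi\in C^2(\Real)$ with $\tilde\phi>0$ and $\tilde\phi=\phi$ on $(-\infty,x_1]\cup \overline{J_2}\cup \overline{J_4}\cup[x_5,\infty)$; the smoothing of the kink happens across $x_3\in I_2$, which is disjoint from the constrained regions. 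Let $H\in C^3(\Real)$ be the antiderivative of $\tilde\phi$ with $H(x_4)=0$. Then $H$ is strictly increasing and $H'=\phi$ on the constrained regions, so setting $w_+:=H$ fulfills all four bullet points immediately.

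I then set $w_-:=H+\eta_-$ where $\eta_-\in C^3(\Real)$ is supported in $[x_1,x_5]$ (so that $w_-=H=w_+$ on the tails) and has $\eta_-'\equiv 0$ on $J_2\cup J_4$ (so that $w_-'=\phi$ there). The requirement $w_-(x_2)=0$ forces $\eta_-\equiv M$ on $J_2$ with $M:=-H(x_2)>0$, while I am free to choose $\eta_-\equiv N$ on $J_4$ for any $N$. On each of the three gap intervals I would then extend $\eta_-$ by a $C^3$ function that is flat to order three at the endpoints and joins the prescribed constant values on either side.

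The main obstacle is arranging $w_-'=\tilde\phi+\eta_-'>0$ pointwise on the gaps. On $I_1$ this is automatic because $\eta_-$ climbs monotonically from $0$ to $M$, so $\eta_-'\ge 0$. On $I_2$ (descent by $M-N$) and $I_3$ (descent by $N$), I would write $\eta_-'=g-\tilde\phi$ with $g>0$, which reduces positivity to the integral conditions $M-N<\int_{I_2}\tilde\phi$ and $N<\int_{I_3}\tilde\phi$. As $J_2,J_4$ shrink, $\int_{I_2}\tilde\phi\to M$ and $\int_{I_3}\tilde\phi\to H(x_5)-H(x_4)>0$, so for sufficiently small $J_2,J_4$ the admissible interval for $N$ is nonempty; any $N$ in this interval works. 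Once $N$ is fixed, constructing a $C^2$ positive function $g$ on each descending gap with the prescribed integral and matching $\tilde\phi$ together with its first two derivatives at the endpoints is a standard smooth interpolation, which completes the construction of $\eta_-$ and hence of $w_-$.
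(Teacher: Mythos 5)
Your proposal is correct, and it is a genuinely different route from the paper's. The paper treats $w_+$ and $w_-$ symmetrically: both $w_\pm'$ are defined piecewise, equal to $\delta(1+|x-x_3|)^{-\delta-1}$ near $x_2$, near $x_4$, and outside $[x_1,x_5]$, and equal to freely chosen positive $C^2$ functions $h_{\pm,1},h_{\pm,2},h_{\pm,3}$ in the three gaps; the normalizations come from integrating $w_+'$ from $x_4$ and $w_-'$ from $x_2$, and the tail equality $w_+=w_-$ on $\{x<x_1\}\cup\{x>x_5\}$ is arranged by imposing two \emph{integral equations} (choose four of the six $h$'s freely, then fix $\int h_{+,3}$ and $\int h_{-,1}$ accordingly). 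You instead take $w_+$ to be essentially canonical, namely the antiderivative $H$ of a fixed smoothing $\tilde\phi$ of $\phi$ normalized at $x_4$, and obtain $w_-$ as $H+\eta_-$ with $\eta_-\in C^3$ compactly supported in $[x_1,x_5]$; this makes the tail equality automatic, but converts the burden into verifying positivity of $w_-'=\tilde\phi+\eta_-'$, which you reduce to the \emph{integral inequalities} $M-N<\int_{I_2}\tilde\phi$ and $N<\int_{I_3}\tilde\phi$ and then satisfy by shrinking $J_2,J_4$. Both arguments work and are of comparable length; the paper's avoids the shrinking step but must solve for two of the $h$'s, while yours avoids the integral equations at the cost of a limiting argument and gives, perhaps, a cleaner conceptual picture of $w_-$ as a compactly supported perturbation of a single baseline antiderivative.
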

\begin{proof}
We will define $w_\pm$ via $w_+(x)=\int_{x_4}^xw_+'(t)dt$ and 
$w_-(x)=\int_{x_2}^xw_-'(t)dt$ in order to satisfy the last condition.
Let $\rho_0=\frac{1}{3}\min_{j=1,2,3,4}(x_{j+1}-x_j)$.  Set
\begin{equation}
\label{eq:wppmdef}
w_{\pm}'(x)=\left\{ \begin{array} {ll}
\delta(1+|x-x_3|)^{-\delta -1} & \text{if}\; x<x_1\; \text{or } x>x_5\\
\delta(1+|x-x_3|)^{-\delta -1}& \text{if}\; |x-x_2|<\rho_0\; \text{or } \; |x-x_4|<\rho_0\\
h_{\pm,1}(x)& \text{if } \; x_1\leq x\leq x_2-\rho_0\\
h_{\pm,2}(x)& \text{if } \; x_2+\rho_0 \leq x \leq x_4-\rho_0\\
h_{\pm,3}(x)& \text{if } \; x_4+\rho_0 \leq x\leq x_5
\end{array}
\right.
\end{equation}
for some $h$'s yet to be chosen.
Now choose strictly positive
 $h_{\pm,1}\;h_{\pm,2},\; h_{\pm,_3}$ so that the resulting functions
$w_{\pm}'$ as defined above are $C^2$, $w_\pm'(x)>0$ for all $x$, and 
so that $\int_{x_2}^{x_1}w_-'(t)dt=\int_{x_4}^{x_1}w_+'(t)dt$
and $\int_{x_2}^{x_5}w_-'(t)dt=\int_{x_4}^{x_5}w_+'(t)dt$.  The 
conditions on the integrals guarantee that $w_+(x)=w_-(x)$ if $x<x_1$ or
$x>x_5$.  Satisfying this condition on the integrals 
 may be accomplished by first choosing $h_{\pm,2}$, $h_{-,3}$,
and $h_{+,1}$, and then choosing $h_{+,3}$ and $h_{-,1}$ so that the integral
conditions are satisfied.
\end{proof}

\vspace{2mm}
\noindent
{\em Proof of Theorem \ref{t:convexobs}}.  
We begin by naming the coordinates of certain points on $\partial\mco$ as in
Figure~\ref{f:comarked}.
Set $y_M=\max\{y:(x,y)\in \overline{\mco}\; \text{for some $x\in \Real$}\}$,
$y_m=\min\{y:(x,y)\in \overline{\mco}\; \text{for some $x\in \Real$}\}$, and 
let $(x_M,y_M)$, $(x_m,y_m)$ denote the corresponding points on $\partial \mco$.
Likewise, set $x_\pm = \pm \max\{ \pm x: (x,y)\in \overline{\mco}\; \text{for some $y\in \Real$}\}$, and let $(x_\pm,y_\pm)$ be the corresponding 
points in $\partial \mco$.  By the strict convexity of $\mco$, each of 
these points is uniquely defined.  Without loss of generality, we may assume
$x_M\geq x_m$.  Since the case $x_M=x_m$ is covered by Theorem \ref{t:flat}, we
assume for the remainder of the proof that $x_M>x_m$. 

Let $r_1=\frac{1}{3}\min(x_+-x_M, x_M-x_m, x_m-x_-)$, and set 
\begin{equation}
x_1=x_-+r_1,\; x_2=x_m, \:x_3= \frac{x_m+x_M}{2}, \; x_4= x_M, \; x_5= x_+-r_1.
\end{equation} 
With these choices, set $w_\pm$ to be the functions given in Lemma 
\ref{l:wpm}.  We shall use these $w_\pm$ to define a single function 
$w$ on $\Omega$, which is adapted to account for the fact that the ``highest'' and 
``lowest'' points of $\partial \mco$, $(x_M, y_M)$ and $(x_m,y_m)$, have
different $x$ coordinates.  Otherwise, our function $w$ will be very similar
to the weights $w$ we have used earlier.

We can write 
$$\Omega\setminus \large( \large((-\infty, x_-)\times \{y_-\}\large) \cup
\large( (x_+,\infty)\times\{y_+\}\large)\large)=\Omega_+\cup \Omega_-$$
where $\Omega_\pm$ are disjoint  connected open sets.  We label these so
that $(-\infty,x_-)\times (y_-,1)\subset \Omega_+$; that is, $\Omega_+$ is the 
``upper'' of the two components.  Now define
\begin{equation}
w(x,y)=\left\{ \begin{array}{ll}
w_+(x) & \text{if $(x,y)\in \Omega_+$}\\
w_-(x)& \text{if $(x,y)\in \Omega_-$}\\
w_+(x)& \text{if $(x,y)\in (-\infty,x_-)\times\{y_-\}\;\text{or if}\; (x,y)\in (x_+,\infty)\times\{y_+\}$}.\end{array}
\right.
\end{equation}
By our choice of $w_\pm$, $w$ is $C^3$ on $\Omega$.  Moreover, $w\nu_x\leq 0$ on 
$\partial \mco$, with equality only at the points $(x_m,y_m)$ and $(x_M,y_M)$.

We now claim that, for $u\in \mathcal D$,
(\ref{e:ibpm}) holds, even though our $w$ is not 
independent of $y$.  To see this, with 
$u,v\in C_c^\infty(\overline{\Omega})$ apply (\ref{e:ibpuvlong}) on $\Omega_\pm$ with 
$w$ replaced by $w_\pm$, and then add the resulting equalities. The boundary 
terms involving 
$\partial \Omega_\pm \setminus \left( \partial \Omega \cap \partial \Omega_\pm\right)$ sum to $0$.  The remainder of the proof follows as in the proof of Lemma \ref{l:ibpcig}. Alternatively, observe that the proof of Lemma \ref{l:ibpcig} only used the fact that $\partial_y w \equiv 0$, and not that $w$ is independent of $y$.

Set 
$$K=\bigcup_{\pm}\supp (w_\pm'(x)-\delta(1+|x-x_3|)^{-1-\delta}).$$
and note that by our choice of $w_\pm$, there is a $C_K>0$ so that  
$$w\nu_x\leq -C_K<0\; \text{on $(K\times(-1,1))\cap \partial \mco$}.$$
Moreover, there is an open set $I$, 
$\overline{I}\subset (x_-,x_+)\subset \Real$, $K\subset I$, and 
a $C_I>0$ so that 
$$w\nu_x\leq -C_I<0\; \text{on $(I\times(-1,1))\cap \partial \mco$}.$$
The use of this $I$ is very similar to the use of $I$ of Theorem \ref{t:flat}.
Moreover, the union of curves $(I\times(-1,1))\cap \partial \mco$ plays a role similar to that
of $\Gamma_F$ from Theorem \ref{t:flat}.

Now choose $\chi_0,\;\chi_1,\;\chi_2,\;\chi_3\in C_c^\infty(I)$, taking
values in $[0,1]$, so that $\chi_0\equiv 1$  near $K$ and 
$\chi_{j+1}\chi_j=\chi_j$ for $j=0,\;1,\;2$. 

 Next, we note that (\ref{eq:pe2})
is valid for our $\chi_0$, which in turn implies
that (\ref{e:ftcpoinc}) is valid for our $\chi_1$.
Then, just as in the proof of 
(\ref{eq:u'andbd}), if $\delta<1$ we can show that
$$\|(1 + |x|)^{-\frac{1+\delta}2} u'\|^2 - \int_{\partial \Omega} w |\partial_\nu u|^2 \nu_x \lesssim \langle |Pu|, |u| + |u'| \rangle + \varepsilon \langle |u'|, |u| \rangle + \|\chi_1 u\|^2$$
and 
$$-\int_{\partial \Omega}w |\partial_\nu u|^2\nu_x\geq
 C_I \int_{(I\times(-1,1))\cap\partial \mco} |\partial_\nu u|^2.$$
Proceeding as in the proofs of Theorems \ref{t:hour} and \ref{t:flat}, we get to  the analogue of \eqref{e:chi1right}:
\[\begin{split}
 \|(1 + |x|)^{-\frac{3+\delta}2} u\|^2 +  \|(1 + |x|)^{-\frac{1+\delta}2} u'\|^2 & + \int_{(I\times(-1,1))\cap\partial \mco} |\chi_3 \partial_\nu u|^2 \\&\lesssim \langle |Pu|, |u| + |u'| \rangle + \varepsilon \langle |u'|, |u| \rangle + \|\chi_1 u\|^2.
\end{split}\]
We now apply \eqref{e:ibpe} with $\mu\in C^1(\Omega)$ chosen such that $\partial_y \mu \equiv 0$, $\mu(x_m,y_m) = \mu(x_M,y_M) = 0$, $\mu'\geq 0$, $\mu' = 1$ near $\supp \chi_1$, and $\supp \mu' \subset \chi_2^{-1}(1)$. The construction of such a $\mu$ follows along the same lines as (but is simpler than) the construction of $w$ above. That gives \eqref{e:eest}, after which the proof proceeds just like the proof of Theorem~\ref{t:flat}.
\qed

\section{Absence of eigenvalues and embedded resonances}\label{s:noembed}
For the proof of Theorem \ref{t:noembed}, we use a variant of Lemma \ref{l:ibpcig}. For $R>0$, let 
\[\Omega_R\defeq\{ (x,y)\in \Omega: \; |x|<R\}.\]
Denote
$\|v\|^2_{L^2(\XR)}=\int_{\XR}|v|^2$ and 
$\langle u, v\rangle_{\XR}=\int _{\XR}  u \overline{v}$.  
\begin{lem}\label{l:ibpforeigenvalues}
Let $u\in \mathcal D$, $R>0$, and let $E, \ \varepsilon \in \mathbb R$.  Then
with $\nabla_y$ denoting the gradient in the $y$ variables only,
\begin{multline}\label{eq:xrint1}
\|u'\|^2_{L^2(\XR)}=
\frac{1}{2}\re \langle Pu, (xu)'\rangle_{\XR}+ \frac{1}{2} \re \langle xu', Pu
\rangle_{\XR} +\varepsilon\im \langle x u',u\rangle_{\XR}+
\frac{1}{2}\int_{\partial \Omega\cap \partial \XR}  x |\partial_\nu u|^2 \nu_x
\\+\frac 12\sum_{\pm}  \re \int_{\partial \XR\cap\{x=\pm R\}}
\left( \pm u'\overline{u}+R(-|\nabla_y u|^2+E|u|^2+|u'|^2)\right).
\end{multline}
Moreover,
\begin{equation}\label{eq:xrint2}
0 = \im \langle Pu, (xu)'\rangle_{\XR}+\im \langle xu',Pu\rangle_{\XR}
-2\varepsilon \re \langle xu',u\rangle_{\XR}+
\sum_{\pm} \int_{\partial \XR\cap\{x=\pm R\}} \varepsilon R |u|^2 \pm  \im u'\overline{u}.
\end{equation}
\end{lem}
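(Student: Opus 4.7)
The plan is to mimic the proof of Lemma~\ref{l:ibpcig} on $\Omega_R$ (which is bounded in $x$) with the choice $w(x) = x$, so that $w' = 1$ and $w'' = w''' = 0$. Although $w$ is unbounded, none of the steps \eqref{e:wcomm}--\eqref{e:pcom3} uses boundedness of $w$, so they carry through unchanged and yield, for $u, v \in H^2_{\mathrm{comp}}(\Omega)$, the $\Omega_R$-analogue of \eqref{e:ibpuvlong}:
\begin{equation*}
2\langle u', v'\rangle_{\Omega_R} = \langle Pu, (xv)'\rangle_{\Omega_R} + \langle xu', Pv\rangle_{\Omega_R} - 2i\varepsilon\langle xu', v\rangle_{\Omega_R} + \int_{\partial \Omega_R}\bigl(xu'\partial_\nu \bar v - \partial_\nu(xu')\bar v - x(Pu)\bar v\,\nu_x + 2u'\bar v\,\nu_x\bigr).
\end{equation*}
The essential new feature is that $\partial \Omega_R$ has two pieces: $\partial\Omega \cap \overline{\Omega_R}$, where the Dirichlet condition applies, and the cross-sections $\{x = \pm R\} \cap \overline{\Omega}$, where it does not. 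The boundary contributions from the latter must be kept.

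I would then specialize to $v = u \in \mathcal{D}$, first for $u \in \mathcal{D}_{\mathrm{comp}}$ and then for general $u$ via a partition of unity, as in Lemma~\ref{l:ibpcig}. The boundary integral over $\partial\Omega \cap \overline{\Omega_R}$ collapses to $\int_{\partial\Omega \cap \partial\Omega_R} x|\partial_\nu u|^2 \nu_x$, exactly as in \eqref{e:ibpm}. On each cross-section we have $\nu_x = \pm 1$ and $\partial_\nu = \pm \partial_x$, so each of the four integrand pieces can be expanded explicitly. After substituting $Pu = -u'' - \Delta_y u - Eu - i\varepsilon u$, the $u''\bar u$ contributions from $-\partial_\nu(xu')\bar u$ and from $-x(Pu)\bar u\,\nu_x$ have opposite signs and cancel, leaving $\pm u'\bar u + R\bigl(|u'|^2 + \Delta_y u\,\bar u + E|u|^2 + i\varepsilon |u|^2\bigr)$. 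A further integration by parts in $y$ on the slice $\{x = \pm R\} \cap \Omega$, using that $u$ vanishes on $\{x = \pm R\} \cap \partial\Omega$ to kill the lower-dimensional boundary term, converts $R\Delta_y u\,\bar u$ into $-R|\nabla_y u|^2$, giving precisely the cross-section integrand in the statement.

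Dividing by $2$ and taking real parts yields \eqref{eq:xrint1}: the $\partial\Omega$ integrand is already real, and the cross-section integrand contributes $\pm\re(u'\bar u)$ together with the real quantity $R(|u'|^2 - |\nabla_y u|^2 + E|u|^2)$. Taking imaginary parts yields \eqref{eq:xrint2}: the $\partial\Omega$ piece drops out, and the cross-section integrand contributes $\varepsilon R|u|^2 \pm \im(u'\bar u)$. The only subtle step is the justification of the $y$-slice integration by parts at $x = \pm R$, which relies on the local $H^2$ regularity of $\mathcal{D}$ functions near $\partial\Omega$ built into \eqref{e:dh2}, together with a standard trace argument for the restriction of $u$ to the slices $\{x = \pm R\}$.
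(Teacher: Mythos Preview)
Your proposal is correct and follows essentially the same approach as the paper: apply the derivation behind \eqref{e:ibpuvlong} on $\Omega_R$ with $w(x)=x$, then take real and imaginary parts. The paper's proof is terse and simply refers back to Lemma~\ref{l:ibpcig}, whereas you spell out the cross-section contributions on $\{x=\pm R\}$ in detail (including the cancellation of the $u''\bar u$ terms and the $y$-slice integration by parts), which is helpful but not a different method.
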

\begin{proof}
The proof of (\ref{eq:xrint1}) 
is essentially that of Lemma \ref{l:ibpcig}.  In particular, 
we use (\ref{e:ibpuvlong}), replacing $\Omega$ by $\XR$.
In addition, we use $w(x)=x$.  Then following
the outline of  Lemma \ref{l:ibpcig} and taking real parts
gives (\ref{eq:xrint1}).   The equality (\ref{eq:xrint2}) follows 
from the same argument, but taking the imaginary part of the resulting 
equation, rather than the real part.
\end{proof}

\begin{proof}[Proof of Theorem \ref{t:noembed}]
We give a proof by contradiction.  Suppose the Dirichlet Laplacian
on $\Omega$ has an eigenvalue $E_1$ or a
resonance embedded in the continuous spectrum at $E_1$.
 Let $u$ be an associated eigenfunction or outgoing resonance state.
Then by separation of variables there are  constants $\gamma_j$
so that 
\begin{equation}\label{eq:threshres}
u(x,y)=\sum_{\sigma_j^2< E_1}\gamma_j e^{i|x|\sqrt{E_1-\sigma_{j}^2}}\phi_j(y)+
\sum_{\sigma_j^2\geq E_1}\gamma_j e^{-|x|\sqrt{\sigma_{j}^2-E_1}}\phi_j(y), \qquad |x| \ge R_0,
\end{equation}
with notation as in  (\ref{eq:cylend}) and \eqref{e:phisig}. To see \eqref{eq:threshres}, recall that the outgoing condition says that   
\begin{equation}\label{e:outgoing}
u|_{\{\pm x > R_0\}} = \left(\lim_{\varepsilon \downarrow 0}(-\Delta_\pm - E_1 - i \varepsilon)^{-1} f_\pm\right) \Big|_{\{\pm x > R_0\}},
\end{equation}
for some $f_\pm \in L^2_{\text{comp}} (\mathbb R_\pm \times Y_\pm)$, where $-\Delta_\pm$ is the Dirichlet resolvent on $\mathbb R_\pm \times Y_\pm$. (The choice of outgoing condition is not essential, and one could also use an incoming condition, replacing $i$ by $-i$ in \eqref{e:outgoing} and in \eqref{eq:threshres}). Furthermore, by \eqref{e:outgoing}, the sum over $\sigma_j^2 \ge E_1$ in \eqref{eq:threshres} converges in $L^2(\Omega \cap \{|x|>R_0\})$, and hence we have
\begin{equation}\label{e:gammabound}
\sum_{\sigma_j^2> E_1} 
|\gamma_j|^2 e^{-2R_0\sqrt{\sigma_j^2-E_1}} (\sigma_j^2-E_1)^{-1/2}< + \infty.
\end{equation}

We first prove that $E_1$ cannot be an 
eigenvalue, since this is easier.

Suppose $E_1$ is an eigenvalue, so that  $u$ is an associated
eigenfunction. Since $u \in L^2$, we must have $\gamma_j = 0$ when $\sigma_j \le E_1$. By \eqref{e:gammabound}, $u$ and its derivatives tend to $0$  exponentially in $|x|$.

Using Lemma \ref{l:ibpforeigenvalues} with $E=E_1$
and $\varepsilon=0$ we find
\begin{equation}\label{e:u'R}\begin{split}
\|u'\|^2_{L^2(\XR)}=
&\frac{1}{2}\int_{\partial \Omega\cap \partial \XR}  x |\partial_\nu u|^2 \nu_x
 \\&+ \frac 12 \sum_{\pm}  \re \int_{\partial \XR\cap\{x=\pm R\}}
\left( \pm u'\overline{u}+R(-|\nabla_y u|^2+E_1|u|^2+|u'|^2)\right).
\end{split}\end{equation}
Taking the limit in   as $R\rightarrow \infty$ and using the 
exponential decay of $u$ and its derivatives gives
$$\|u'\|^2_{L^2(\Omega)}=\frac{1}{2}\int_{\partial \Omega}  x |\partial_\nu u|^2 \nu_x \leq 0.$$
But this means that $u$ is independent of $x$.  Since $u\in L^2(\Omega)$ 
is nontrivial, this
is a contradiction.

The argument for showing there are no resonances embedded in the continuous
spectrum is similar, but requires some further  computations.

Suppose $u$ is a resonance state associated to $E_1\in \Real$.  
Applying (\ref{eq:xrint2}) with $E=E_1$, $\varepsilon=0$
and using (\ref{eq:threshres}) along with the orthonormality of $\{\phi_j\}$
gives, for sufficiently large $R$,
$$0= \im \sum_{\pm}\pm \int_{\partial\XR\cap\{x=\pm R\}}  u'\overline{u} =
\sum_{\sigma_j^2<E_1} \sqrt{E_1-\sigma_{j}^2}|\gamma_j|^2$$
which in turn implies that $\gamma_j=0$ if $\sigma_j^2<E_1$.

Returning to (\ref{eq:threshres}), 
note again that by \eqref{e:gammabound} the terms with $\sigma_j^2>E$ are
exponentially decaying in $|x|$ along with their derivatives, while those with 
$\sigma_j^2=E_1$ have $x$ derivative $0$.
From (\ref{eq:threshres}) and using these observations, 
\begin{multline*}
\int_{\partial \XR\cap\{x=\pm R\}}
\left( \pm u'\overline{u}+R((-|\nabla_y u|^2+E_1|u|^2+|u'|^2)\right)\\
=\int_{\partial \XR\cap\{x=\pm R\}}
\left( \pm u'\overline{u}+R((\Delta_y u) \overline{u}+E_1|u|^2+|u'|^2)\right)
\end{multline*}
is exponentially decreasing in $R$.
Again taking  the limit of \eqref{e:u'R} as $R\rightarrow \infty$ 
we have
$$\|u'\|^2_{L^2(\Omega)}=\frac{1}{2}\int_{\partial \Omega} x |\partial_\nu u|^2 \nu_x 
\leq 0,$$
so that $u'\equiv 0$.  But a nontrivial $u$ with $u'\equiv 0$ and $-\Delta u=E_1 u$
 can only satisfy Dirichlet boundary conditions on $\partial \Omega$ if  $\partial \Omega$ is invariant under translation in the $x$ direction; 
that is,
$\Omega=\Real \times \widetilde{Y}$ for some $\widetilde{Y}\subset \Real^{d-1}$.
\end{proof}

\section{Resonance-free regions}\label{s:resfree}
For a domain $\Omega\subset \Real^d$ which has  cylindrical ends, 
the resolvent of the Dirichlet Laplacian $(-\Delta - z)^{-1}$ has a meromorphic continuation
to a Riemann surface $\hat{Z}$.  
Resolvent estimates of the type of Theorems \ref{t:cig}, \ref{t:hour}, \ref{t:flat}, and \ref{t:convexobs} imply, essentially via an
application of \cite[Theorem 5.6]{cd}, that there is a region near the continuous spectrum 
in which the meromorphic continuation of the resolvent is in fact analytic.
To make a precise statement, we first introduce the space to which
the resolvent continues.

The continuous spectrum of $-\Delta$ is given by $[\sigma_1^2,\infty)$, where $\sigma_1^2$ is the smallest Dirichlet eigenvalue of $-\Delta_Y$.
For general domains or  manifolds with 
cylindrical ends, there may, in addition, be  eigenvalues of $-\Delta$, either in $(0,\sigma_1^2)$ or 
embedded in the continuous spectrum.
  For $z\in \Complex $ so that $z$ is not in the spectrum of $-\Delta$ set
$R(z)=(-\Delta-z)^{-1}:L^2(\Omega)\rightarrow L^2(\Omega)$. As an operator
from $L^2_{\text{comp}}(\Omega)$ into $L^2_{\operatorname{loc}}(\Omega)$, the resolvent $R(z)$
has a meromorphic continuation to the Riemann surface
$\hat{Z}$ which we describe next.

The Riemann surface $\hat{Z}$  is determined by the set $\{\sigma_j^2\}$
of Dirichlet eigenvalues of $-\Delta_Y$.
For $z\in \Complex \setminus [\sigma_1^2,\infty)$,
define $\tau_j(z)=(z-\sigma_j^2)^{1/2}$, where we take the square root to 
have positive imaginary part.  Then $\hat{Z}$ is the minimal Riemann 
surface so that for each $j\in \Natural$, $\tau_j(z)$ is an analytic,
single-valued function on $\hat{Z}$. The Riemann surface $\hat{Z}$ forms
a countable cover of $\Complex$, ramified at points corresponding to 
$\sigma_j^2$, $j\in \Natural$.  For any $z\in \hat{Z}$, $\imag \tau_j(z)>0$ for all but
finitely many $j$.  We call 
the ``physical region'' the portion of $\hat{Z}$
in which $\imag \tau_j(z)>0$ for all $j\in \Natural$.  In the 
physical region and away from eigenvalues of $-\Delta$, 
$R(z)$  is a bounded operator on $L^2(\Omega)$.
 For further details about the construction of
$\hat{Z}$ and a proof that the
resolvent of $-\Delta$ on $\Omega$ has a meromorphic continuation to $\hat{Z}$, 
see \cite{gui}, \cite[Section 6.7]{tapsit}, or \cite[Section 2]{cd2}.

We define a distance on $\hat{Z}$ as follows: for $z,\;z'\in \hat{Z}$,
\begin{equation}
d(z,z')\defeq \sup_{j}| \tau_j(z)-\tau_j(z')|.
\end{equation}
That this is  a metric is shown in \cite[Section 5.1]{cd}.

For $E>|\sigma_1|$, denote by $E\pm i0$ the points in $\hat{Z}$ which are on 
the boundary of the physical region and which are obtained as limits $\lim_{\pm \delta \downarrow 0}E+i\delta$. These
points correspond to the continuous spectrum of $-\Delta$.
If $E>\sigma_j^2$, then $\pm \tau_j(E\pm i 0)>0$,
 and if $\sigma_j^2>E$ then $\tau_j(E\pm i 0)\in i \Real_+$.

The next theorem describes quantitatively a region near the boundary
of the physical space in which the resolvent is guaranteed to be analytic.
\begin{thm}\label{t:resfree}
Let $\Omega\subset \Real^d$ be a domain with  cylindrical 
ends which in addition satisfies the conditions of one of 
Theorem \ref{t:cig}, \ref{t:hour}, \ref{t:flat}, or \ref{t:convexobs}, and 
let $\chi \in L^\infty_{\text{comp}}(\Omega)$.  Then there
are positive constants $C_1, \ C_2$, and $E_0$ so that
$\chi R(z)\chi$ is analytic in
 $\{ z\in \hat{Z}: \; d(z,E\pm i0)<C_1(1+E)^{-1}\}$ for all $E \ge E_0$, and in this same 
region $\| \chi R(z)\chi\| \leq C_2 (1+E)^{1/2}$.
\end{thm}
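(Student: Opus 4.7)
The plan is to combine the physical-side resolvent estimates established in Theorems \ref{t:cig}, \ref{t:hour}, \ref{t:flat}, and \ref{t:convexobs} with the meromorphic extension machinery of \cite[Theorem 5.6]{cd}, whose proof is based on a resolvent identity of Vodev \cite{v}. The role of the present paper is essentially to supply the required input bound on the physical side; the passage to $\hat Z$ is then a black-box citation.

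First I would rewrite the weighted estimates \eqref{e:recig} and \eqref{e:e12} as a bound of the form
\[
\|\chi R(E + i\varepsilon)\chi\|_{L^2(\Omega) \to L^2(\Omega)} \le C(1+E)^{1/2}, \qquad E \ge E_0,\ \varepsilon \in (0,1].
\]
Since $\chi \in L^\infty_{\text{comp}}(\Omega)$, the ratio $\chi / (1+|x|)^{-(3+\delta)/2}$ is bounded, and so this bound is immediate from the proved estimates on $w R(E + i\varepsilon) w$ with $w = (1+|x|)^{-(3+\delta)/2}$, where $C$ depends on $\chi$. Under the assumption of Theorem \ref{t:cig} the analogous estimate even holds uniformly on $\mathbb C \setminus [0,\infty)$, which makes the extension to $\hat Z$ almost automatic; in the settings of Theorems \ref{t:hour}, \ref{t:flat}, and \ref{t:convexobs} the bound only lives in the physical region, and the step below is what carries the real content.

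Next I would invoke \cite[Theorem 5.6]{cd}. The idea is to factor $R(z)$, for $z \in \hat Z$ near $E \pm i 0$, as a perturbation of the model resolvent on the cylindrical ends $\mathbb R_\pm \times Y_\pm$, which extends meromorphically to $\hat Z$ explicitly via the cross-sectional eigenexpansion in $\{\phi_j\}$ from \eqref{e:phisig}. Vodev's identity rearranges these ingredients into a relation of the form $\chi R(z) \chi = \chi R(E + i\varepsilon) \chi \, (I + K(z))^{-1}$, where $K(z)$ is compactly supported and bounded by the product of the physical-side norm $C(1+E)^{1/2}$ and the difference of model resolvents, the latter being $O(d(z, E + i\varepsilon))$ in operator norm on compactly supported functions. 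Choosing $d(z, E \pm i 0) \le C_1 (1+E)^{-1}$ and $\varepsilon$ comparable makes $\|K(z)\| \le 1/2$, so a Neumann series inversion gives both the stated analyticity of $\chi R(z) \chi$ and the bound $\|\chi R(z) \chi\| \le C_2 (1 + E)^{1/2}$.

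The main obstacle is largely bookkeeping rather than analytic: \cite[Theorem 5.6]{cd} was written for manifolds with cylindrical ends, and one must verify that its proof transfers to the Dirichlet Laplacian on a Euclidean domain under the regularity assumptions on $\partial \Omega$ collected in Section \ref{s:not}. This should be routine, since the ends of $\Omega$ are honest products on which separation of variables applies identically to \cite{cd}, and the elliptic regularity and trace theory recorded in \eqref{e:dh2} and the surrounding discussion are what \cite{cd} uses at the corresponding places. Still, the two small powers of $(1+E)^{1/2}$ in the final radius $(1+E)^{-1}$ — one from the physical-side norm and one from the geometry of $\hat Z$ relative to the Euclidean plane near energy $E$ — need to be tracked carefully to confirm the exponent in the statement.
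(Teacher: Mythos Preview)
Your proposal is correct and follows essentially the same route as the paper: both reduce Theorem~\ref{t:resfree} to a black-box citation of \cite[Theorem~5.6]{cd}, with the physical-side input supplied by Theorems~\ref{t:cig}--\ref{t:convexobs}. The only point you leave implicit that the paper makes explicit is the semiclassical rescaling $(-\Delta - E) = h^{-2}(-h^2\Delta - 1)$ with $h = E^{-1/2}$, which converts the $O(E^{1/2})$ cut-off resolvent bound into an $O(h^{-3})$ bound for the rescaled operator; this is the form in which \cite[Theorem~5.6]{cd} is stated, and it is what accounts for the exponent $(1+E)^{-1}$ in the radius (so your ``two powers of $(1+E)^{1/2}$'' heuristic is on the right track, but the clean bookkeeping goes through $h$).
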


After a semiclassical rescaling, the proof of this theorem is the same as the
proof of \cite[Theorem 5.6]{cd}. More specifically, we write $(-\Delta - E) = h^{-2}(-h^2 \Delta - 1)$ with $h = E^{-1/2}$. Then the $O(E^{1/2})$ resolvent bound implied by Theorem \ref{t:cig}, \ref{t:hour}, \ref{t:flat}, or \ref{t:convexobs} corresponds to a $O(h^{-3})$ resolvent bound for the scaled operator.


\vspace{2mm}
\noindent
{\bf Acknowledgments.}   The authors gratefully acknowledge the 
partial support of the Simons Foundation (TC, collaboration grant for
mathematicians), an MU Research Leave (TC),
 and the National Science Foundation (KD,  Grant DMS-1708511).
The authors thank Peter Hislop for helpful conversations.

\end{document}